\def \HAL{1} 

\author{Vincent Andrieu\thanks{V.~Andrieu is with Universit\'e Lyon 1 CNRS UMR 5007 LAGEP, France and Fachbereich C - Mathematik und Naturwissenschaften, Bergische Universit\"at Wuppertal, Germany.
\hfill \null \break
{\tt\small vincent.andrieu@gmail.com}
}, Bayu Jayawardhana\thanks{B.~Jayawardhana is with ENTEG, Faculty of Mathematics and
Natural Sciences, University of Groningen, the
Netherlands,\hfill \null \break
{\tt\small bayujw@ieee.org, b.jayawardhana@rug.nl}}, Laurent
	Praly\thanks{L.~Praly is with
MINES ParisTech,
PSL Research University,
Centre for automatic control and systems theory,
France,\hfill \null \break{\tt\small Laurent.Praly@mines-paristech.fr}}
%
%
}

\if\HAL 1
\documentclass{article}
\usepackage{amsthm} 
\usepackage[a4paper, total={6in, 8in}]{geometry}
\else
\documentclass[journal]{IEEEtran}
\IEEEoverridecommandlockouts                              
\fi

\bibliographystyle{plain}
\usepackage{amsfonts,latexsym,amsmath,amssymb}
\usepackage[mathscr]{eucal}
\usepackage{graphicx}
\usepackage{comment}
\usepackage{hyperref}
\usepackage[usenames,dvipsnames]{xcolor}
\usepackage{subfigure}

\usepackage[normalem]{ulem}

%
%

\newtheorem{lemma}{Lemma}
\newtheorem{proposition}{Proposition}

\newtheorem{definition}{Definition}

\def\ddt{\frac{\mbox{\small d}\hphantom{\hskip 0.07em t}\null }{\mbox{\small
d}\hskip 0.07em t}}

\catcode`\@=11
\def\downparenfill{$\m@th\braceld\leaders\vrule\hfill\bracerd$}
\def\overparen#1{\mathop{\vbox{\ialign{##\crcr\crcr
\noalign{\kern0.4ex}
\downparenfill\crcr\noalign{\kern0.4ex\nointerlineskip}
$\hfil\displaystyle{#1}\hfil$\crcr}}}\limits}
\catcode`\@=12

\def\NN{{\mathbb N}}    
\def\RR{{\mathbb R}}    


\def\DR{\mathcal{D}}   



\def\de{{\widetilde e}}
\def\dE{{\widetilde E}}
\def\dG{{\widetilde G}}
\def\dk{{\widetilde k}}

\def\dr{{\widetilde r}}
\def\dx{{\widetilde x}}     
\def\dX{{\widetilde X}}     


\def\PR{{P}}

\def \kell{{\ell}}      
\def\der{{\mathfrak{d}}}

\def\bida{A}
\def\bidb{B}
\def\bidc{C}

\def\indice{k}

\def\startmodifOLD{\begingroup\color{black}} 
\def\stopmodifOLD{\endgroup}
\def\startmodif{\begingroup\color{black}} 
\def\stopmodif{\endgroup}
\def\startmodifbj{\begingroup\color{black}} 
\def\stopmodifbj{\endgroup}

\hyphenation{Riemannian}

\begin{document}

\title{
Transverse exponential stability and applications 
}
\maketitle
\begin{abstract}
We
investigate how the following properties are related to each other:
i) - A manifold is ``transversally'' exponentially stable; ii) -  The
``transverse'' linearization along any solution in the manifold is
exponentially stable; iii) -  There exists a
field of positive definite quadratic forms whose 
restrictions to the directions transversal to the 
manifold are decreasing along the flow.
\startmodifbj
We illustrate their relevance with the 
study of exponential incremental stability. Finally, we apply 
these results to two control design problems, nonlinear 
observer design and synchronization. In particular, we provide 
necessary and sufficient conditions for the design of nonlinear observer 
and of nonlinear synchronizer with exponential convergence property. 
\stopmodifbj
 \end{abstract}

\section{Introduction}

The property of attractiveness of a (non-trivial) invariant manifold
 is often sought in many control design  problems.   In the classical
internal model based output regulation \cite{IsidoriByrnes_TAC_90OutputReg}, it is known
that the closed-loop system must have an attractive invariant
manifold, on which, the tracking error is equal to zero.  In the
Immersion \& Invariance \cite{AstolfiOrtega_TAC_03_InvNewToolStabAdapCont}, in the sliding-mode
control approaches, or observer designs  
\cite{AndrieuPraly_SIAM_06},  obtaining an attractive manifold is an integral
part of the design procedure.  Many multi-agent system problems, such
as, formation control, consensus and synchronization problems, are 
also
closely related to the analysis and design of an attractive invariant
manifold, see, for example, \cite{DePersisJayawardhana_TCNS_14,ScardoviSepulchre_Aut_09SynchNetw,WielandEtAl_Aut_11IntModSyn}.


\startmodif The study of stability and/or attractiveness of invariant
manifolds and more generally of sets has a long history.  See for
instance \cite[\S 16]{Yoshizawa_Book_1966stability} and the references therein.  In
this paper, we \stopmodif \startmodifbj focus on the \stopmodifbj 
\startmodif exponential convergence \stopmodif \startmodifbj property 
by studying \stopmodifbj the  system linearized transversally to the manifold.
We show that this
attractiveness property is equivalent to the existence of positive
definite quadratic forms which are decreasing along the flow of the
transversally linear system.  For constant quadratic forms and when
the system has some specific structure, the latter becomes the
Demidovich criterion which is a sufficient, but not yet necessary,
condition for convergent \startmodif systems \stopmodif
\cite{PavlovVanWouhNijmeijer_SCL_04ConvSystTribDemidovich,Pavlov_Book_05,RufferEtAl_SCL_13ConvSystIncStab}.
On the other hand, if we consider the standard output regulation
theory as pursued in \cite{IsidoriByrnes_TAC_90OutputReg}, the
attractiveness of the invariant manifold is established using the
center manifold theorem which corresponds to the stability property of
the linearized system at an equilibrium point.  Due to the lack of
characterization of an attractive invariant manifold, most of the
literature on constructive design for nonlinear output regulator is
based on various different sufficient conditions that can be very
conservative.  In these regards, our main results can potentially
provide a new framework for control designs aiming at making an
invariant manifold attractive.

The paper is divided into two parts.
\startmodifbj
In Subsection \ref{mainresult}, 
we study 
\stopmodifbj 
a dynamical system that admits a transverse exponentially stable
invariant manifold.  In particular, we establish  equivalent relations between:
\begin{list}{}{%
\parskip 0pt plus 0pt minus 0pt%
\topsep 0pt plus 0pt minus 0pt
\parsep 0pt plus 0pt minus 0pt%
\partopsep 0pt plus 0pt minus 0pt%
\itemsep 0pt plus 0pt minus 0pt
\settowidth{\labelwidth}{(iii)}%
\setlength{\labelsep}{0.2em}%
\setlength{\leftmargin}{\labelwidth}%
\addtolength{\leftmargin}{\labelsep}%
}
\item[(i)]
the transverse exponential stability of an invariant manifold; 
\item[(ii)]
the exponential stability of the transverse linearized system; 
\item[(iii)]
the existence of 
field of positive definite quadratic forms
the restrictions to the transverse direction to the
manifold of which are decreasing along the flow.
\end{list}
\startmodifbj
We illustrate these results by 
considering a particular case of exponential incremental stable systems in Subsection \ref{Sec_IncStability}. Here, incremental stability refers to the property where the distance between any two trajectories converges to zero (see, for example, \cite{Angeli_TAC_02_lyapIncStab,FromionScorletti_CDC_05,Angeli_TAC_09_FurthReslyapIncStab}).
For such systems, the property (i) $\Leftrightarrow$ (iii) is used to prove that the
exponential incremental stability property is equivalent to the
existence of a Riemannian distance which is contracted by the flow.\stopmodifbj 

\startmodifbj In the second part of the paper, we apply the
equivalence results to two different control problems: nonlinear
observer design and synchronization of nonlinear multi-agent systems.
In both problems, a necessary condition is obtained.  Based on this necessary
condition, we propose a novel design
for an observer, in Subsection \ref{Sec_Observer}, and for 
a synchronizer, in Subsection \ref{Sec_Synchronization}.


In Subsection \ref{Sec_Observer}, we reinterpret the three properties 
(i), (ii) and (iii) in the context of observer design. This allows us to revisit some of the results obtained recently in \cite{SanfelicePraly_TAC_12} and \cite{AndrieuBesanconSerres_CDC_13_NecObs}
and, more importantly, to show that the sufficient condition given in \cite{SanfelicePraly_TAC_12} is 
actually also a necessary condition to design an exponential (local) full-order observer.

Finally, in Subsection \ref{Sec_Synchronization}, we solve a 
nonlinear synchronization problem. In particular, we give some necessary and sufficient
conditions to achieve (local) exponential synchronization of nonlinear multi-agent systems involving more than two 
agents. This result generalizes our preliminary work in \cite{AndrieuJayawardhanaPraly_CDC_13}. Moreover, 
 under an extra assumption, we show how to obtain a global 
 synchronization for the two agents case.  

It is worth noting that our main results are applicable to other control problems beyond the two control problems mentioned before. In a recent paper by Wang, Ortega \& Su \cite{Wang_CDC_14}, our results have been applied to solve an adaptive control problem via the Immersion \& Invariance principle. \stopmodifbj

\startmodif
\section{Main result}
\label{sec_NormExpManifold}
\subsection{Transversally exponentially stable manifold}
\label{mainresult}
\stopmodif
Throughout this section, we consider a system in the form
\begin{equation}
\label{eq_System}
\dot e = F(e,x)\ ,\quad\dot x = G(e,x)
\end{equation}
where $e$ is in $\RR^{n_e}$, $x$ is in $\RR^{n_x}$
and the functions
$F:\RR^{n_e}\times\RR^{n_x}\rightarrow \RR^{n_e}$ and
$G:\RR^{n_e}\times\RR^{n_x}\rightarrow \RR^{n_x}$ are
$C^2$. We denote by $(E(e_0,x_0,t),X(
e_0,x_0,t))$ the (unique)
solution which goes through $(e_0,x_0)$ in $\RR^{n_e}\times\RR^{n_x}$
at time $t=0$. We assume it is defined for all positive times, i.e. the
system is {\it forward complete}.

\startmodif
The system (\ref{eq_System}) above can be used, for example, to study 
the behavior of two 
distinct solutions $X(x_1,t)$ and $X(x_2,t)$ of the system defined on $\RR^n$ by
\begin{equation}
\label{Aut_Syst}
\dot x = f(x) 
\end{equation}
Indeed,  we 
obtain an $(e,x)$-system of the type (\ref{eq_System}) with
\begin{equation}
\label{LP36}
F(e,x) = f(x+e) - f(x)\  ,\quad G(e,x)= f(x)\ .
\end{equation}
This is the context of incremental stability that we will use throughout this section to illustrate our main results.
\stopmodif

In the following, to simplify our notations, we denote by
$B_e(a)$ the open ball of radius $a$ centered at the origin in $\RR^{n_e}$.

We study the links between the following three  notions. 
\begin{list}{}{%
\parskip 0pt plus 0pt minus 0pt%
\topsep 1ex plus 0pt minus 0pt%
\parsep 0.5ex plus 0pt minus 0pt%
\partopsep 0pt plus 0pt minus 0pt%
\itemsep 1ex plus 0pt minus 0pt
\settowidth{\labelwidth}{1em}%
\setlength{\labelsep}{0.5em}%
\setlength{\leftmargin}{\labelwidth}%
\addtolength{\leftmargin}{\labelsep}%
}
\item[\textsf{
TULES-NL}]
\itshape
(Transversal uniform
local exponential stability)
\\
The system (\ref{eq_System}) is forward complete and there exist strictly positive real numbers $r$, $k$ and
$\lambda$ such
that we have, for all $(e_0,x_0,t)$ in $B_e(r)\times\RR^{n_x}\times
\RR_{\geq 0}$,
\begin{equation}
\label{eq_ExpStab}
|E(e_0,x_0,t)| \leq k |e_0| \exp(-\lambda t)
\  .
\end{equation}
\upshape
\item[\textsf{UES-TL}]
\itshape
(Uniform exponential stability for the transversally linear system)\\
The system
\begin{equation}
\label{LP4}
\dot{\dx }= \dG(\dx)  :=  G(0,\dx)
\end{equation}
is forward complete and there exist strictly positive real numbers $\dk $  and $\tilde \lambda$
such that any solution $(\dE(\de_0,\dx_0,t),\dX(\dx_0,t))$  of the transversally linear system
\begin{equation}
\label{eq_System_dif}
\dot \de = \frac{\partial F}{\partial e}(0,\dx)\de\ ,\quad\dot \dx = \dG(\dx)
\end{equation}
satisfies, for all $(\de_0,\dx_0,t)$ in $\RR^{n_e}\times\RR^{n_x}\times
\RR_{\geq 0}$,
\begin{equation}\label{eq_ExpStabDrift}
|\dE(\de_0,\dx,t)|\leq  \dk \exp(-\tilde \lambda t)|\de_0|
\ .
\end{equation}
\upshape
\item[\textsf{ULMTE}]
\itshape
(Uniform Lyapunov matrix transversal equation)
\\
For all positive definite matrix $Q$, there exists a continuous
function $\PR :\RR^{n_x}\rightarrow\RR^{n_e\times n_e}$
and strictly positive real numbers $\underline{p}$ and $\overline{p}$
such that $\PR $ has a derivative $\der_{\dG}\PR$ along $\dG$ in the
following sense
\begin{equation}
\label{LP9}
\der_{\dG} \PR(\dx)\; := \;
 \lim_{h\to 0}
\frac{\PR(\dX(\dx,h))-\PR(\dx)}{h}
\end{equation}
and we have, for all $\dx$ in $\RR^{n_x}$,
\begin{eqnarray}
\label{eq_TensorDerivative}
&\displaystyle
\hskip -3em
\der_{\dG} \PR(\dx) +
\PR (\dx ) \frac{\partial F}{\partial e}(0,\dx)
+ \frac{\partial F}{\partial e}(0,\dx)^\prime \PR(\dx)
\leq -Q
\\[0.5em]
\label{LP7}
&\displaystyle
\underline{p} \,  I \leq  \PR (\dx)\leq \overline{p}\,  I
\  .
\end{eqnarray}
\upshape
\end{list}

In other words, the system (\ref{eq_System}) is said to be TULES-NL if the manifold $\mathcal{E}:=\{(e,x):\,  e=0\}$ is
exponentially stable for the system
(\ref{eq_System}), locally in $e$
and uniformly in $x$; and it is said to be UES-TL if the manifold $\tilde{\mathcal{E}}:=\{(\dx,\de):\,  \de=0\}$ of the linearized system transversal to $\mathcal E$ in (\ref{eq_System_dif}) is exponentially stable uniformly in $\dx$.
 
\startmodif
Concerning the ULMTE property, condition (\ref{eq_TensorDerivative})
is related to the notion of horizontal contraction introduced in 
\cite[Section VII]{ForniSepculchre_TAC_2014}). 
However a key 
difference is that we do not require the monotonicity condition (\ref{eq_TensorDerivative}) to 
hold in the whole manifold $\RR^{n_e}\times\RR^{n_x}$ but only along the invariant submanifold  $\mathcal E$.
In this case the corresponding horizontal Finsler-Lyapunov function 
$V:\left(\RR^{n_x}\times\RR^{n_e}\right)\times\left(\RR^{n_x}\times\RR^{n_e}\right)$
that we get takes the form
$
V((x,e),(\delta_x,\delta _e)) =\delta _e ^\prime P(x) \delta _e
$.

In the case where the manifold is reduced to a single point, i.e. 
when the
system (\ref{eq_System}) is simply $\dot e=F(e)$ with an equilibrium point at the origin (i.e. $F(0)=0$) then 
\begin{itemize}
\item
the
TULES-NL property can be understood as the local exponential stability of the origin;
\item
the
UES-TL notion translates to the exponential stability of the
linear system
$\dot \de = \frac{\partial F}{\partial e}(0)\de$; and
\item
the  ULMTE concept is about the existence of a
positive definite matrix $P$ solution to the Lyapunov equation
$P\frac{\partial F}{\partial e}(0) + \frac{\partial F}{\partial
e}(0)^\top P = -Q$ where
$Q$ is an arbitrary positive definite matrix.
\end{itemize}%
In this particular case it is well known that these three properties 
are equivalent.
\stopmodif

\startmodifbj
For the example of incremental stability, as mentioned before, the three properties of TULES-NL, UES-TL and ULMTE can be understood globally as follows~:
\begin{list}{}{%
\parskip 0pt plus 0pt minus 0pt%
\topsep 1ex plus 0pt minus 0pt%
\parsep 0.5ex plus 0pt minus 0pt%
\partopsep 0pt plus 0pt minus 0pt%
\itemsep 1ex plus 0pt minus 0pt
\settowidth{\labelwidth}{1em}%
\setlength{\labelsep}{0.5em}%
\setlength{\leftmargin}{\labelwidth}%
\addtolength{\leftmargin}{\labelsep}%
}
\item[\textsf{P1 (TULES-NL)}]
\itshape
System (\ref{Aut_Syst}) is
globally
exponentially incrementally stable.
Namely there exist two strictly positive real numbers $k$ and $\lambda$ such that for all $(x_1,x_2)$ in $\RR^{n}\times\RR^n$ we have,
for all $t$ in $\RR_{\geq 0}$,
\begin{equation}\label{eq_ExpIncStab}
|X(x_1,t) - X(x_2,t)|\leq k|x_1-x_2|\exp(-\lambda t).
\end{equation}
\upshape
\item[\textsf{P2 (UES-TL) }]
\itshape
The manifold  $\mathcal{E}=\{(x,e),e=0\}$ is globally
exponentially stable for the system
\begin{equation}\label{DriftSystIncStab}
\dot e = \frac{\partial f}{\partial x}(x)e\ ,\quad\dot x = f(x)
\end{equation}
Namely there exist two strictly positive real numbers $k_e$ and $\lambda _e$
such that for all $(e,x)$ in $\RR^{n}\times\RR^n$, the corresponding
solution of (\ref{DriftSystIncStab}) satisfies
$$
|E(e,x,t) |\leq k_e |e|\exp(-\lambda _et)\ ,\
\forall t\in \RR_{\geq 0}
\  .
$$
\upshape
\item[\textsf{P3 (ULMTE)}]
\itshape
There exists a positive definite matrix $Q$
 in $\RR^{n\times n}$, a $C^2$ function $P
 :\RR^n\rightarrow\RR^{n\times n}$ and strictly positive real numbers
 $\underline{p}$ and $\overline{p}$ such that
$P $ has a derivative $\der_{f}P$ along $f$ in the
sense of (\ref{LP11}) ,and satisfies (\ref{eq_TensorDerivIncremental}) and(\ref{LP10}).
\upshape
\end{list}

\stopmodifbj \startmodif
In this context it is known that
P3 $\Rightarrow$ P1. Actually
asymptotic incremental stability for which Property P1 is a particular
case is known to be equivalent to the existence of an appropriate
Lyapunov function. This has been established in
\cite{Yoshizawa_ARMA_64_ExStaPerSol,TeelPraly_ESAIM_00,Angeli_TAC_02_lyapIncStab}
or \cite{RufferEtAl_SCL_13ConvSystIncStab}
for instance.
In our context, this Lyapunov function is given as a Riemannian distance. 
We shall show below that, as for the case of an equilibrium point, we have also P1 $\Rightarrow$ P2 
$\Rightarrow$ P3, (see Proposition \ref{theo_IncStab}), namely
incremental exponential stability implies the existence of
a Riemannian distance 
for which the flow is contracting.
\stopmodif

In studying the equivalence relation between 
\startmodif
TULES-NL,UES-TL  and  ULMTE,
\stopmodif
we are not interested in the possibility of a solution near the
invariant manifold to inherit some properties of solutions in this
manifold, such as, the asymptotic phase,
the shadowing property, 
the
reduction principle, etc.,
nor in the existence of some special coordinates allowing us to exhibit some invariant splitting in the dynamics (exponential dichotomy). 
This is the reason that,  besides forward
completeness,  we assume nothing for the  in-manifold dynamics
given by~:
$$ 
\dot{\tilde x} = \dG(\tilde x) = G(0,\tilde x)
\  . 
$$
So, 
for not misleading our reader,
we prefer to use the word ``transversal'' instead of ``normal'' as seen for instance in the various definitions of normally hyperbolic submanifolds given in \cite[\S 1]{HirschPughShub_Book_70invariant}.

 In order to simplify the exposition of our results  and to  concentrate our attention on
the main ideas, we assume everything is global and/or uniform, including restrictive
bounds. Most of this can be relaxed with working on open or compact
sets, but then with restricting the results to time intervals where a
solution remains in such a particular set.


\subsubsection{%
\startmodif
\textsf{
TULES-NL} ``$ \Rightarrow $'' \textsf{UES-TL}
\stopmodif
}
In the spirit of Lyapunov first method, we have  the following 
result. 

\begin{proposition}
\label{Prop_DetecNec}
 If Property \textsf{TULES-NL} holds and there exist positive
real numbers $\rho $,
$\mu$ and $c$ such that, for all $x$ in
$\RR^{n_x}$,
\begin{equation}
\label{LP1}
 \left|\frac{\partial F}{\partial e}(0,x)\right|\leq \mu
\ ,\quad
\left|\frac{\partial G}{\partial x}(0,x)\right|\leq \rho
\end{equation}
and, for all $(e,x)$ in $B_e(kr)\times\RR^{n_x}$,
\begin{equation}
\label{LP2}
\left|\frac{\partial^2 F}{\partial e\partial e}(e,x)\right|\leq c\ ,\
\left|\frac{\partial^2 F}{\partial x\partial e}(e,x)\right|\leq c\  ,\
\left|\frac{\partial G}{\partial e}(e,x)\right|\leq c
\; ,
\end{equation}
then Property \textsf{UES-TL} holds.
\end{proposition}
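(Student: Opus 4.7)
The plan is to realize any trajectory $\dE(\de_0, \dx_0, \cdot)$ of the transversally linear system as the limit of rescaled trajectories of the nonlinear system as the initial condition in $e$ is contracted towards the manifold. Since invariance of $\{e=0\}$ forces $F(0,x)\equiv 0$ (read from (\ref{eq_ExpStab}) with $e_0=0$), the curve $\zeta_s(t) := E(s\de_0, \dx_0, t)/s$ is well-defined for $|\de_0|\le r$ and $s\in(0,1]$ and satisfies an ODE whose coefficients become arbitrarily close, as $s\to 0$, to those of the transversally linear equation for $\dE$. By \textsf{TULES-NL} we have the uniform bound $|\zeta_s(t)|\le k|\de_0|e^{-\lambda t}$ for every $t\ge 0$; so if we can show $\zeta_s(t)\to \dE(\de_0, \dx_0, t)$ pointwise in $t$, the same exponential bound passes to $\dE$. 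Linearity of the transverse equation in $\de$ then extends the estimate from $|\de_0|\le r$ to all of $\RR^{n_e}$, yielding (\ref{eq_ExpStabDrift}) with $\dk = k$ and $\tilde\lambda = \lambda$.

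Before carrying out the convergence, one checks forward completeness of (\ref{eq_System_dif}): the bound $|\partial G/\partial x(0,x)|\le \rho$ in (\ref{LP1}) gives affine growth of $\dG$, hence forward completeness of (\ref{LP4}); the bound $|\partial F/\partial e(0,x)|\le \mu$ then implies at most exponential growth of $\de$ along any fixed $\dx$ trajectory, so no finite escape in the linearized system.

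The heart of the argument is to show, on each compact interval $[0,T]$, the two convergences $X(s\de_0, \dx_0, \cdot)\to \dX(\dx_0, \cdot)$ and $\zeta_s(\cdot)\to \dE(\de_0, \dx_0, \cdot)$ as $s\to 0$. For the first, writing $\Delta(t) = X(s\de_0, \dx_0, t)-\dX(\dx_0, t)$ and splitting $G(E,X)-G(0,\dX)$ as $[G(E,X)-G(0,X)]+[G(0,X)-G(0,\dX)]$, the bounds $|\partial G/\partial e|\le c$ from (\ref{LP2}) and $|\partial G/\partial x(0,\cdot)|\le\rho$ from (\ref{LP1}), together with $|E|\le ks|\de_0|e^{-\lambda t}$, yield by Gronwall $|\Delta(t)|\le C(T)\,s$ on $[0,T]$. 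For the second, a Taylor expansion of $F(s\zeta_s, X)/s$ in the first argument, using $|\partial^2 F/\partial e\partial e|\le c$ from (\ref{LP2}), casts the equation for $\zeta_s$ as
\begin{equation*}
\dot\zeta_s \;=\; \frac{\partial F}{\partial e}(0, X(s\de_0, \dx_0, t))\,\zeta_s \;+\; s\,R_s(t), \qquad |R_s(t)|\le \tfrac{c}{2}|\zeta_s(t)|^2,
\end{equation*}
while $|\partial^2 F/\partial x\partial e|\le c$ combined with the preceding convergence of $X$ yields uniform convergence on $[0,T]$ of $\partial F/\partial e(0, X(s\de_0, \dx_0, \cdot))$ to $\partial F/\partial e(0, \dX(\dx_0, \cdot))$. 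Classical continuous dependence of ODE solutions on parameters then delivers uniform convergence of $\zeta_s$ to $\dE$ on $[0,T]$, and one takes the limit in $|\zeta_s(t)|\le k|\de_0|e^{-\lambda t}$ at each fixed $t\ge 0$.

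The main technical subtlety is the interplay between an estimate that is uniform in time (\textsf{TULES-NL}) and approximations that hold only on compact intervals: the Gronwall constant $C(T)$ above generally grows with $T$, but this is harmless because the decay bound on $\zeta_s$ is independent of $s$ and of $T$, and only pointwise-in-$t$ convergence is required to pass to the limit.
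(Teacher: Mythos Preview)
Your argument is correct and takes a genuinely different route from the paper's. The paper proceeds by a \emph{segmented shadowing} argument: it fixes a time step $S$, samples the linearized trajectory at $\de_i=\dE(\de_0,\dx_0,iS)$, $\dx_i=\dX(\dx_0,iS)$, and on each interval $[iS,(i+1)S]$ compares $\dE$ to the nonlinear solution $E(\de_i,\dx_i,\cdot)$ launched from that sample point. Gr\"onwall estimates on the mismatches $Z_i=|E-\dE|$ and $W_i=|X-\dX|$ (controlled by (\ref{LP1})--(\ref{LP2})) show that, for $S$ large and $|\de_0|$ small, $|\de_{i+1}|\le(1-\varepsilon)|\de_i|$; induction and homogeneity then give (\ref{eq_ExpStabDrift}) with $\dk=\exp(\mu S)/(1-\varepsilon)$ and $\tilde\lambda=-\ln(1-\varepsilon)/S$. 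Your \emph{scaling limit} approach instead identifies $\dE(\de_0,\dx_0,t)$ directly as $\lim_{s\to0}E(s\de_0,\dx_0,t)/s$ (i.e.\ as $\frac{\partial E}{\partial e_0}(0,\dx_0,t)\de_0$) and passes the uniform-in-$s$ bound $|\zeta_s(t)|\le k|\de_0|e^{-\lambda t}$ through pointwise convergence on each fixed $t$. This is shorter and yields the sharper constants $\dk=k$, $\tilde\lambda=\lambda$. The paper's method, by contrast, never lets $s\to0$ and works entirely at a fixed small scale, which makes it more directly reusable when one wants to compare two \emph{nonlinear} systems rather than a nonlinear system and its exact linearization. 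A minor remark: forward completeness of (\ref{LP4}) follows more quickly than in your write-up, since $(0,\dX(\dx_0,t))$ is already a solution of the forward complete system (\ref{eq_System}); the bound $\rho$ is not needed for this step.
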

\vspace{0.2cm}

The proof of this proposition is  given  in Appendix \ref{SecProof_Prop_DetecNec}.
\startmodifbj Roughly speaking, it is \stopmodifbj 
based on the comparison between a given $e$-component of a solution
$\dE(\de_0,\dx_0,t)$ of (\ref{eq_System_dif}) with pieces of 
$e$-component of solutions   $E(\de_i,\dx_i,t-t_i)$ of solutions of
(\ref{eq_System}) where $\de_i,\dx_i$ are sequences of points defined on $\dE(\de_0,\dx_0,t)$.  
Thanks to  the bounds (\ref{LP1}) and (\ref{LP2}),
 it is possible to show that $\dE$ and $E$ remain sufficiently closed so that $\dE$ inherit the
convergence property of the solution $E$.
As a consequence, in
 the particular case in which $F$ does not depend on $x$, the two functions $E$ and $\dE$ 
do not depend on $x$ either and the bounds on the derivatives of the $G$ function 
are useless.

\subsubsection{%
\startmodif
\textsf{UES-TL}  ``$ \Rightarrow $'' \textsf{ULMTE}
\stopmodif
}
Analogous to the 
property of existence of a solution to the
Lyapunov matrix equation, we have the following proposition on the link between UES-TL and ULMTE notions. 
\begin{proposition}
\label{Prop_ExistTensor}
 If  
Property \textsf{UES-TL} holds  and there exists a positive real number $\mu$ such that
\begin{equation}
\label{LP5}
\left|\frac{\partial F}{\partial e}(0,x)\right|\leq \mu
\qquad \forall x\in \RR^{n_x}\ ,
\end{equation}
then Property \textsf{ULMTE} holds.
\end{proposition}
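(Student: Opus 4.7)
The plan is to follow the classical Lyapunov construction, integrating the quadratic cost $Q$ along the transversally linearized flow. Given a positive definite $Q$, let $\Phi(\tilde x,t)$ denote the transition matrix of the time-varying linear system
\[
\dot{\tilde e}=\frac{\partial F}{\partial e}(0,\tilde X(\tilde x,t))\,\tilde e,\qquad \Phi(\tilde x,0)=I,
\]
so that $\tilde E(\tilde e_0,\tilde x_0,t)=\Phi(\tilde x_0,t)\tilde e_0$. Property \textsf{UES-TL} then reads $|\Phi(\tilde x,t)|\le \dk\exp(-\tilde\lambda t)$. I would define
\[
P(\tilde x)\;:=\;\int_0^{\infty}\Phi(\tilde x,t)^{\prime}\,Q\,\Phi(\tilde x,t)\,dt,
\]
the integral converging absolutely and uniformly in $\tilde x$ by the exponential bound, which also yields the upper bound $P(\tilde x)\le\overline{p}\,I$ with $\overline p = \dk^{2}|Q|/(2\tilde\lambda)$.

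The lower bound is where the hypothesis (\ref{LP5}) enters. For any unit vector $v$, writing $\varphi(t)=\Phi(\tilde x,t)v$, we have $\frac{d}{dt}|\varphi|^2 = 2\varphi^\prime\tfrac{\partial F}{\partial e}(0,\tilde X(\tilde x,t))\varphi\ge -2\mu|\varphi|^2$, hence $|\Phi(\tilde x,t)v|^2\ge |v|^2 e^{-2\mu t}$. Choosing any fixed $T>0$ and using $v^\prime P(\tilde x)v\ge \lambda_{\min}(Q)\int_0^T |\Phi(\tilde x,t)v|^2\,dt$ yields a strictly positive lower bound $\underline p$, independent of $\tilde x$. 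This is the one place the bound $\mu$ on the linearization is indispensable: without it the transition matrix could collapse arbitrarily fast and the integrand could lose coercivity in finite time.

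For the differential inequality (\ref{eq_TensorDerivative}), I would use the cocycle identity $\Phi(\tilde X(\tilde x,h),t)\,\Phi(\tilde x,h)=\Phi(\tilde x,t+h)$, which follows from uniqueness of solutions. Substituting and changing variable $s=t+h$ gives
\[
P(\tilde X(\tilde x,h))=\Phi(\tilde x,h)^{-\prime}\!\!\left[\int_h^{\infty}\!\!\Phi(\tilde x,s)^{\prime}Q\,\Phi(\tilde x,s)\,ds\right]\!\Phi(\tilde x,h)^{-1}.
\]
Differentiating at $h=0$, the bracketed term contributes $-Q$ while the outer factors contribute $-\tfrac{\partial F}{\partial e}(0,\tilde x)^\prime P(\tilde x)-P(\tilde x)\tfrac{\partial F}{\partial e}(0,\tilde x)$, so (\ref{eq_TensorDerivative}) holds with equality, hence in particular as the required inequality.

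The remaining point is continuity of $P$. Since $F$ is $C^{2}$ and $\tilde G$ is continuous, $\Phi(\tilde x,t)$ depends continuously on $\tilde x$ for each $t$; combined with the uniform exponential majorant $\dk^{2}|Q|\exp(-2\tilde\lambda t)$ on the integrand, dominated convergence delivers continuity of $P$. I expect the main obstacle to be a careful justification of the lower bound: verifying that $\mu$ is the correct quantity controlling the backward growth of $\Phi$, and hence that $\underline p$ can be chosen uniformly in $\tilde x$. Once this coercivity is established, the rest of the argument is the standard Lyapunov-equation computation lifted to a parameter-dependent setting.
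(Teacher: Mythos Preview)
Your proposal is correct and follows essentially the same route as the paper: the paper defines $P(\tilde x)=\int_0^\infty \Phi(\tilde x,s)^\prime Q\,\Phi(\tilde x,s)\,ds$, obtains the same upper bound $\overline p=\dk^2\lambda_{\max}(Q)/(2\tilde\lambda)$, derives the lower bound from (\ref{LP5}) via the equivalent estimate $|v^\prime\Phi(\tilde x,t)^{-1}|\le e^{\mu t}|v|$ (yielding $\underline p=\lambda_{\min}(Q)/(2\mu)$, which you would also get by letting $T\to\infty$), and establishes (\ref{eq_TensorDerivative}) with equality through the same semigroup/cocycle computation you outline. The only point the paper treats with slightly more care is the interchange of the limit in $h$ with the improper integral when differentiating, which it justifies by the uniform exponential decay of $\Phi$; you should make that step explicit as well.
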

\vspace{0.2cm}

The proof of this proposition is given in Appendix \ref{Sec_ProofPropTensor}.
 The idea is to show that, for every symmetric positive definite
matrix $Q$, the function $\PR :\RR^{n_x}\rightarrow\RR^{n_e\times n_e}$ given by
\begin{equation}\label{eq_P}
\PR (\dx) = \lim_{T\to + \infty }\int_0^{T} \left(
\frac{\partial \dE}{\partial \de}(0,\dx,s)
\right)^\prime Q
\frac{\partial \dE}{\partial \de}(0,\dx,s)ds
\end{equation}
is well defined, continuous and satisfies all the requirements of the property ULMTE. 
The assumption (\ref{LP5}) 
is used to show that $P$ satisfies the left inequality
in (\ref{LP7}).
Nevertheless this inequality holds without (\ref{LP5}) provided the 
function
$s\mapsto \left|\frac{\partial \dE}{\partial \de}(0,\dx,s)\right|$ 
does not go too fast to zero.

\subsubsection{%
\startmodif
\textsf{ULMTE} ``$\Rightarrow$'' \textsf{TULES-NL}
\stopmodif
}
\begin{proposition}\label{Prop_Lyap}
If Property \textsf{ULMTE} holds and there exist positive
real numbers $\eta  $ and $c$ such that,
for all $(e,x)$ in $B_e(\eta )\times\RR^{n_x}$,
\begin{eqnarray}
\label{LP8}
&\displaystyle \left| \frac{\partial P}{\partial x} (x)\right|\leq c
\; ,
\\[0.3em]
\label{LP6}
&\hskip -1.6 em
\displaystyle \left|\frac{\partial^2 F}{\partial e\partial
e}(e,x)\right|\leq c\; ,\
\left|\frac{\partial^2 F}{\partial x\partial e}(e,x)\right|\leq c\;  ,\
\left|\frac{\partial G}{\partial e}(e,x)\right|\leq c
\, ,\null
\end{eqnarray}
then Property \textsf{TULES-NL} holds.
\end{proposition}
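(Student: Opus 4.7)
The plan is to use the matrix field $P$ provided by ULMTE to build a Lyapunov function $V(e,x) = e^\top P(x)\, e$ on a tubular neighborhood of $\mathcal{E}$, and then to control its derivative along the nonlinear flow of (\ref{eq_System}) using the extra bounds (\ref{LP8}) and (\ref{LP6}). From (\ref{LP7}) we immediately get the quadratic sandwich $\underline{p}\,|e|^2 \leq V(e,x) \leq \overline{p}\,|e|^2$, which will convert exponential decay of $V$ into the exponential decay of $|E|$ required by TULES-NL.

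The main step is to compute $\frac{d}{dt} V(E(e_0,x_0,t),X(e_0,x_0,t))$ and bound it by $-\alpha V$ for $|e|$ small. First I would split $\dot V$ into three contributions. The term coming from $x$ can be written using (\ref{LP9}) as $\der_{\dG}P(x)$ plus a correction $\frac{\partial P}{\partial x}(x)\,[G(e,x)-G(0,x)]$, since from (\ref{LP6}) one has $|G(e,x)-G(0,x)|\leq c|e|$ by the mean value theorem, and (\ref{LP8}) bounds $|\partial P/\partial x|$ by $c$; this correction is thus $O(|e|)$ in the matrix norm sense. Next, $F(e,x)=\frac{\partial F}{\partial e}(0,x)\,e + R(e,x)$ with $|R(e,x)|\leq \frac{c}{2}|e|^2$ by Taylor's theorem and (\ref{LP6}). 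Combining these with (\ref{eq_TensorDerivative}) yields, for $(e,x)\in B_e(\eta)\times \RR^{n_x}$,
\begin{equation*}
\dot V \;\leq\; -e^\top Q e \;+\; C_1 |e|^3 \;\leq\; -q_{\min}|e|^2 + C_1|e|^3,
\end{equation*}
where $q_{\min}$ is the smallest eigenvalue of $Q$ and $C_1$ is a constant depending only on $c$, $\overline{p}$ and the $\mu$ arising from (\ref{LP5}) (which, as noted, is implicit here via bounding $\partial F/\partial e$; if needed I would either invoke (\ref{LP5}) or redo the Taylor expansion to absorb that term).

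I would then choose $r^\ast>0$ so small that $C_1 r^\ast \leq q_{\min}/2$, giving $\dot V \leq -\frac{q_{\min}}{2\overline{p}} V$ as long as $|E(t)|\leq r^\ast$. The sandwich inequality provides the standard forward-invariance argument: if $|e_0|< r:= r^\ast\sqrt{\underline{p}/\overline{p}}$, then $V(e_0,x_0)\leq \overline{p}|e_0|^2 < \underline{p}(r^\ast)^2$, so the sublevel set $\{V\leq \overline{p}|e_0|^2\}$ is contained in the slab $|e|<r^\ast$, and by the differential inequality it is forward invariant. This rules out escape, and integrating gives $V(t)\leq V(0)\exp(-\lambda t)$ with $\lambda = q_{\min}/(2\overline{p})$; using the sandwich once more yields $|E(e_0,x_0,t)| \leq \sqrt{\overline{p}/\underline{p}}\,|e_0|\exp(-\lambda t/2)$, which is the required estimate with $k=\sqrt{\overline{p}/\underline{p}}$.

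The main obstacle is the mismatch between the derivative $\der_{\dG}P$ used in the definition (\ref{LP9}), which is taken along the \emph{unperturbed} invariant-manifold flow $\dot{\tilde x}=G(0,\tilde x)$, and the derivative that actually appears along the true $x$-trajectory, which moves under $\dot x = G(e,x)$. Bridging the two requires precisely the bound (\ref{LP8}) on $\partial P/\partial x$ together with the Lipschitz-in-$e$ control of $G$ provided by (\ref{LP6}); this is also why those extra hypotheses are imposed in the statement. The rest of the argument is the standard Lyapunov computation, with forward completeness ensuring that the bound $|E(t)|\leq r^\ast$, once verified for small initial data by the invariance argument above, holds for all $t\geq 0$.
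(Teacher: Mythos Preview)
Your proposal is correct and follows essentially the same route as the paper: take $V(e,x)=e^\top P(x)e$, split $\dot V$ into the ULMTE contribution and the two correction terms coming from $F(e,x)-\frac{\partial F}{\partial e}(0,x)e$ and $G(e,x)-G(0,x)$, bound these via (\ref{LP8})--(\ref{LP6}), and choose $r$ small enough. Your hesitation about needing a bound $\mu$ on $\frac{\partial F}{\partial e}(0,x)$ is unwarranted: that linear term is absorbed \emph{entirely} by the ULMTE inequality (\ref{eq_TensorDerivative}), so $C_1$ depends only on $c$, $\overline{p}$ and $\underline{p}$, exactly as in the paper's estimate $\dot V\le -\bigl[\lambda_{\min}\{Q\}/\overline{p}-2c(1+c)\overline{p}/\underline{p}\,|e|\bigr]V$.
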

\vspace{0.2cm}

The proof of this proposition can be found
 in Appendix \ref{Sec_ProofPropLyap}. 
This is a direct consequence of the use of $V(e,x) = e^\prime P(x)e$ as a Lyapunov function.
The bounds (\ref{LP8}) and (\ref{LP6}) are used
 to show that, with equation (\ref{eq_TensorDerivative}),  the time derivative of this Lyapunov function is negative in a (uniform) tubular neighborhood of the manifold $\{(e,x),e=0\}$. 

\startmodifbj 
\subsection{Revisiting the exponential incremental stable systems}
\label{Sec_IncStability}
Incremental stability of an autonomous system 
(\ref{Aut_Syst}) is the property that a distance between any two solutions of  (\ref{Aut_Syst}) converges asymptotically to zero.  
The characterization of it has been studied thoroughly, for example, in \stopmodif  \cite{Angeli_TAC_02_lyapIncStab,FromionScorletti_CDC_05,Angeli_TAC_09_FurthReslyapIncStab}.
In \cite{Angeli_TAC_02_lyapIncStab, Angeli_TAC_09_FurthReslyapIncStab}, a Lyapunov characterization of
incremental stability ($\delta$-GAS for autonomous systems and
$\delta$-ISS for non-autonomous ones) is given based on the Euclidean
distance between two states that evolve in an identical system.
A variant of this notion is that of convergent systems 
discussed in
\cite{PavlovVanWouhNijmeijer_SCL_04ConvSystTribDemidovich,RufferEtAl_SCL_13ConvSystIncStab}.
All these studies are based on the notion of contracting flows which has been widely studied in the
literature and for a long time, 
see, for example,
\cite{Lewis_49_AJM_MetPropDiffEq,Lewis_51_AJM_DiffEqVarMet,Hartman_Book_64,
Demidovich_UspeMatNauk_61_DissSysDiffEq,Nemeth_PhD_98_GeomMinBroMonot,LohmillerSlotine_Aut_98_ContAnNLSyst,ForniSepculchre_TAC_2014}.
These flows generate trajectories between which an appropriately defined
distance  is monotonically decreasing with increasing 
time.   See
\cite{Jouffroy_CDC_05AncContAnal} for a historical discussion on the
contraction analysis and \cite{Sontag_Book_10ContrSystInput} for a partial survey.

The big issue in this view points is to find the appropriate distance 
which may be a difficult task.
The results in
Section \ref{sec_NormExpManifold} may help in this regard
with providing an explicit construction of
a  Riemannian distance.

Precisely, let $P$ be a $C^2$ 
function defined on $\RR^n$ the values of which are symmetric 
matrices satisfying
\begin{equation}
\label{LP10}
\underline{p}\,   I \leq  P ( x)\leq \overline{p}\,  I
\qquad \forall x \in \RR^n
\end{equation}
The length of  any piece-wise $C^1$ path $\gamma :[s_1,s_2]\to
\RR^n$ between two arbitrary points $x_1=\gamma (s_1)$ and 
$x_2=\gamma (s_2)$ in
$\RR^n$ is defined as~:
\begin{equation}\label{eq_RiemanianLength}
\left. L(\gamma)\vrule height 0.51em depth 0.51em width 0em \right|_{s_1}^{s_2}=\int_{s_1}^{s_2}\sqrt{\frac{d\gamma}{ds}(\sigma )^\prime  P (\gamma(\sigma ))\frac{d\gamma}{ds}(\sigma )}\: d\sigma
\end{equation}
By minimizing along all such path we get the distance  $d(x_1,x_2)$.

Then, thanks to
the well established
relation between (geodesically) monotone vector field (semi-group
generator) (operator) and contracting (non-expansive) flow (semi-group)
(see \cite{Lewis_49_AJM_MetPropDiffEq,Hartman_Book_64,Brezis_Book_73,IsacNemeth_Book_08} and many
others), we know that this distance between any two solutions of 
(\ref{Aut_Syst}) is exponentially decreasing to $0$ as time goes on 
forward if we have
\begin{equation}
\label{eq_TensorDerivIncremental}
\der_f P (x)+ P (x)\frac{\partial f}{\partial x}(x)
+\frac{\partial f}{\partial x}(x)^\prime  P (x) \leq  -Q
\qquad \forall x\in \RR^n
\  ,
\end{equation}
where $Q$ is a positive definite symmetric matrix and
\begin{equation}
\label{LP11}
\der_{f} P(x)\;=\;
 \lim_{h\to 0}
\frac{ P( X( x,h))- P( x)}{h}
\  .
\end{equation}
For a proof, see for example \cite[Theorem 1]{Lewis_49_AJM_MetPropDiffEq} or \cite[Theorems
5.7 and 5.33]{IsacNemeth_Book_08}
or \cite[Lemma 3.3]{Reich_Book_05_NLSemGrp} (replacing $f(x)$ by $x+ h
f(x)$).

In this context, using the main results
of our previous section, we can
 show that, if we have exponential incremental stability, then there exists
a function $P$ meeting the above requirements.
Specifically, we have the following proposition.\vspace{0.2cm}

\begin{proposition}[Incremental stability]
\label{theo_IncStab}
Assume the
system (\ref{Aut_Syst}) is forward complete with a function $f$ which 
is $C^3$ with bounded
first, second and third derivatives. Let $X(x,t)$ denotes its
solutions. Then we have P1 $\Rightarrow$ P2 
$\Rightarrow$ P3 (and therefore P1 $\Leftrightarrow$ P2 
$\Leftrightarrow$ P3).
\end{proposition}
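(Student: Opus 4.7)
The plan is to cast the incremental-stability problem into the framework of Subsection~\ref{mainresult} via the embedding (\ref{LP36}), so that P1, P2 and P3 become, respectively, (global versions of) TULES-NL, UES-TL and ULMTE (with an added $C^2$ requirement on $P$), and then to invoke Propositions~\ref{Prop_DetecNec} and~\ref{Prop_ExistTensor}. With $F(e,x)=f(x+e)-f(x)$ and $G(e,x)=f(x)$, one has $E(e,x,t)=X(x+e,t)-X(x,t)$, the transversally linearized system (\ref{eq_System_dif}) coincides with (\ref{DriftSystIncStab}), and $\der_{\dG}P=\der_f P$ since $\dG=f$. For this choice $\tfrac{\partial F}{\partial e}(0,x)=\tfrac{\partial G}{\partial x}(0,x)=\tfrac{\partial f}{\partial x}(x)$, the second-order partials $\tfrac{\partial^2 F}{\partial e\partial e}$ and $\tfrac{\partial^2 F}{\partial x\partial e}$ both equal $\tfrac{\partial^2 f}{\partial x^2}(x+e)$, and $\tfrac{\partial G}{\partial e}\equiv 0$. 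Hence the boundedness assumptions on $Df$ and $D^2f$ automatically supply all the derivative bounds (\ref{LP1}), (\ref{LP2}) and (\ref{LP5}) required below.

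For the implication \textbf{P1 $\Rightarrow$ P2}, I would observe that P1 gives (\ref{eq_ExpIncStab}) for all $x_1,x_2$, which translates into (\ref{eq_ExpStab}) for all $e_0\in\RR^{n_e}$, in particular on any ball $B_e(r)$, so TULES-NL holds. Proposition~\ref{Prop_DetecNec} then yields UES-TL, which is precisely P2.

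For the implication \textbf{P2 $\Rightarrow$ P3}, I would apply Proposition~\ref{Prop_ExistTensor}, for any chosen symmetric positive definite $Q$, to produce a continuous $P$ of the form (\ref{eq_P}) that satisfies (\ref{LP10}) and (\ref{eq_TensorDerivIncremental}). Under our identification, $\tfrac{\partial\dE}{\partial\de}(0,x,s)=\tfrac{\partial X}{\partial x}(x,s)$, so
\[
P(x)=\int_0^{+\infty}\!\!\left(\tfrac{\partial X}{\partial x}(x,s)\right)^{\!\prime} Q\,\tfrac{\partial X}{\partial x}(x,s)\,ds\ .
\]
The main obstacle is to upgrade $P$ from continuous to $C^2$: this is the additional regularity that P3 requires beyond ULMTE, and it is precisely where the hypothesis that $f$ is $C^3$ with bounded derivatives up to order three enters. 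Since $f\in C^3$, the flow $x\mapsto X(x,t)$ is $C^3$, so the integrand above is $C^2$ in $x$. To differentiate under the integral sign twice by dominated convergence, I would establish uniform-in-$x$ exponential bounds on $\tfrac{\partial^{j} X}{\partial x^{j}}(x,s)$ for $j=1,2,3$. The case $j=1$ is exactly P2. The cases $j=2,3$ follow by applying the variation-of-constants formula to the higher-order variational equations: their forcing terms are polynomial expressions in the bounded quantities $D^k f(X(x,s))$, $k\le 3$, and in the lower-order variations (already shown to decay exponentially), convolved against the exponentially decaying resolvent $\tfrac{\partial X}{\partial x}(X(x,s),t-s)$ furnished by P2 uniformly in $x$. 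The resulting integrable exponential bounds on the first and second $x$-derivatives of the integrand justify passing two derivatives under the integral, yielding $P\in C^2$ and completing the verification of P3.
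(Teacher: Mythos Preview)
Your proposal is correct and follows essentially the same route as the paper: embed via (\ref{LP36}), read off the derivative bounds (\ref{LP1}), (\ref{LP2}), (\ref{LP5}) from the boundedness of $Df$ and $D^2f$, invoke Propositions~\ref{Prop_DetecNec} and~\ref{Prop_ExistTensor} for P1~$\Rightarrow$~P2~$\Rightarrow$~P3, and then upgrade the integral formula (\ref{eq_P}) for $P$ to $C^2$ by controlling the higher-order variational equations.

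The only noteworthy difference is in how the exponential decay of $\tfrac{\partial^2 X}{\partial x^2}$ (and $\tfrac{\partial^3 X}{\partial x^3}$) is obtained. You argue by variation-of-constants, using that the resolvent of the first variational equation is $\tfrac{\partial X}{\partial x}(X(x,s),t-s)$ and decays exponentially by P2, while the forcing is a polynomial in the bounded $D^k f$ and the already-controlled lower-order variations. The paper instead uses the just-constructed $P$ itself as a Lyapunov function: with $\nu(t)=\tfrac{\partial^2 X}{\partial x_i\partial x}(x,t)v$, one differentiates $\nu^\prime P(X(x,t))\nu$ and invokes (\ref{eq_TensorDerivIncremental}) and (\ref{LP10}) directly. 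Both arguments are standard and rely on the same ingredients; yours is slightly more elementary in that it does not loop back through $P$, while the paper's is a bit slicker once $P$ is in hand.
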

\vspace{0.2cm}

In other words, exponential
incremental stability property is equivalent to the existence of a
Riemannian distance
which is contracted by the flow
 and can be used as a
$\delta$-GAS Lyapunov function.
\startmodifbj Note also that, despite the fact that the main results in Section \ref{sec_NormExpManifold} are local, when we restrict ourselves to the incremental stability problem, we can obtain a global result.\\[0.2cm] \stopmodifbj 

\begin{proof}
\underline{P1 $\Rightarrow$ P2 $\Rightarrow$ P3:}
\startmodif
Consider the system (\ref{LP36}) and let $n_x=n_e=n$.
\stopmodif
The boundedness of the first derivative of $f$ implies
the forward completeness of the corresponding systems
(\ref{eq_System}) and (\ref{LP4}).
Moreover the inequalities (\ref{LP1}), (\ref{LP2})  and (\ref{LP5})
with $r=+\infty $ follow from the assumption of boundedness of the 
derivatives of $f$. 

As a consequence
P1$\Rightarrow$ P2  follows from
Proposition \ref{Prop_DetecNec}
and P2$\Rightarrow$ P3  from
Proposition \ref{Prop_ExistTensor}.
Note however that it remains to show that $P$ defined in (\ref{eq_P}) is $C^2$.
This is obtained employing the boundedness of the first, second and third derivatives of $f$.
Indeed, note that we have for all $(t,x)$
 $
 \frac{\partial \dE}{\partial e}(0,x,t) = \frac{\partial X}{\partial x}(x,t)
 $.
 So to show that $P$ is $C^1$ it suffices to show that the mapping $t\mapsto \frac{\partial^2 X}{\partial x_i\partial x}(x,t)$ goes exponentially to zero as time goes to infinity.
Note that this is indeed the case since given a vector $v$ in $\RR^n$ and $i$ in $\{1,\dots,n\}$ the mapping $\nu(t)= \frac{\partial^2 X}{\partial x_i\partial x}(x,t)v$ is solution to
\begin{multline*}
\dot \nu = \frac{\partial f}{\partial x}(X(x,t))\nu+\\\sum_{j=1}^n\frac{\partial^2 f}{\partial x_j\partial x}(X(x,t))\frac{\partial X_j}{\partial x_i}(x,t)\frac{\partial X}{\partial x}(x,t)v
\end{multline*}
Hence, from (\ref{eq_TensorDerivIncremental}), (\ref{LP10}) and the fact that $f$ has bounded second derivatives, it yields the existence of a positive real number $\tilde c$ such that
$$
\dot{\overparen{\nu^\prime P(X(x,t))\nu}}\leq -\nu^\prime Q\nu + \tilde c|\nu| \left|\frac{\partial X}{\partial x}(x,t)\right|^2\ .
$$
Since $t\mapsto \frac{\partial X}{\partial x}(x,t)$ exponentially goes to zero as time goes to infinity, it implies that $\nu$ exponentially goes to zero. Hence, $P$ is $C^1$. Employing the bound on the third derivative and following the same route, it follows that $P$ is $C^2$.
\end{proof}

\section{Applications}
\label{Sec_Applications}
In this section, we apply Propositions \ref{Prop_DetecNec}, \ref{Prop_ExistTensor} and \ref{Prop_Lyap} in
\startmodif
two
\stopmodif
different contexts: full order observer and synchronization.

\subsection{Nonlinear observer design}
\label{Sec_Observer}
\startmodif
Consider a system 
\begin{equation}\label{Aut_Syst_Output}
\dot x = f(x)\ ,\quad y=h(x)\ .
\end{equation}
with state $x$ in $\RR^n$ and output $y$ in $\RR^p$
augmented with  a state observer of the particular form
\begin{equation}\label{Observer_main_eq}
\dot {\hat x} = f(\hat x) + K(y,\hat x) 
\end{equation}
with state $\hat x$ in $\RR^n$ and where
\begin{equation}
\label{LP15}
K(h(x),x)\;=\; 0\qquad \forall x
\  .
\end{equation}
Assuming the functions $f$, $h$ and $K$ are $C^2$, we are interested in having
the manifold $\{(x,\hat x):\,  x=\hat x\}$ exponentially stable for the overall system 
\begin{equation}\label{eq_Observer}
\dot x = f(x) \ ,\quad\dot {\hat x} = f(\hat x) + K(y,\hat x)\ .
\end{equation}

When specified to this context, the properties
TULES-NL,  UES-TL and ULMTE are
\begin{list}{}{%
\parskip 0pt plus 0pt minus 0pt%
\topsep 1ex plus 0pt minus 0pt%
\parsep 0.5ex plus 0pt minus 0pt%
\partopsep 0pt plus 0pt minus 0pt%
\itemsep 1ex plus 0pt minus 0pt
\settowidth{\labelwidth}{1em}%
\setlength{\labelsep}{0.5em}%
\setlength{\leftmargin}{\labelwidth}%
\addtolength{\leftmargin}{\labelsep}%
}
\item[\textsf{Exponentially convergent observer (TULES-NL)}:]
The system (\ref{eq_Observer}) is forward complete and there exist strictly positive real numbers $r$, $k$ and
$\lambda$ such
that we have, for all $(x,\hat x,t)$ in $\RR^n\times\RR^{n_x}\times
\RR_{\geq 0}$ satisfying
$ |x_0-\hat x_0|\leq r$, we have
\begin{equation}\label{eq_Observer_convrate}
|X(x_0,t) - \hat X(x_0,\hat x_0,t)|\leq k|x_0-\hat x_0|\exp(-\lambda t) 
\  .
\end{equation}
\item[\textsf{UES-TL  FOR OBSERVER}]
The system
$$
\dot x= f(x)
$$
is forward complete and there exist strictly positive real numbers $\dk $  and $\tilde \lambda$
such that any solution $(\dE(\de_0,x_0,t),X(x_0,t))$  of the transversally linear system
\begin{equation}\label{eq_LinObsSysK}
\dot \de = \left[\frac{\partial f}{\partial x}(x) + \frac{\partial K}{\partial y}(h(x),x)\frac{\partial h}{\partial x}(x)\right] \de
\ ,\quad\dot x = f(x)
\end{equation}
satisfies, for all $(\de_0,x_0,t)$ in $\RR^{n}\times\RR^{n}\times
\RR_{\geq 0}$,
\begin{equation}
\label{LP12}
|\dE(\de_0,x_0,t)|\leq  \dk \exp(-\tilde \lambda t)|\de_0|
\ .
\end{equation}

%
\item[\textsf{ULMTE FOR OBSERVER}]:
For all positive definite matrix $Q$, there exists a continuous
function $\PR :\RR^{n}\rightarrow\RR^{n\times n}$
and strictly positive real numbers $\underline{p}$ and $\overline{p}$
such that we have, for all $x$ in $\RR^{n}$,
$$
\underline{p} \,  I \leq  \PR (x)\leq \overline{p}\,  I
\  ,
$$
\vbox{\hsize=\linewidth\noindent
$\displaystyle 
\der_f \PR(x) 
$\hfill \null \\\null \hfill   $\displaystyle 
+\; 
2 \texttt{Sym}\left( \!\PR (x ) \!\left[\frac{\partial f}{\partial x}(x) + \frac{\partial K}{\partial y}(h(x),x)\frac{\partial h}{\partial x}(x)\right]
\!\right)
$\refstepcounter{equation}\label{LP13}\hfill$(\theequation)$
\\[0.5em]\null \hfill $\displaystyle 
\leq -Q
\  .
\quad $}\\[0.5em]
where $\texttt{Sym}(A)=A + A^\prime$.
\end{list}

Propositions \ref{Prop_DetecNec}, \ref{Prop_ExistTensor} and 
\ref{Prop_Lyap} give conditions under which these properties are 
equivalent. But these properties assume the data of the 
correction term $K$. Hence, by rewriting UES-TEL and TULES-NL
in a way in which the design parameter $K$ disappears,
these propositions give necessary conditions for the existence of an exponentially 
convergent observer.

Property UES-TL involves the  existence of an observer with correction term depending on $x$
for the time-varying linear system
resulting from the linearization along a
solution
 to the system (\ref{Aut_Syst_Output}), i.e.
\begin{equation}\label{eq_LinObsSys}
\dot \de = \frac{\partial f}{\partial x}(x)\de \ ,\quad\tilde y = \frac{\partial h}{\partial x}(x)\de
\end{equation}
seeing $\tilde y$ as output.
As a consequence of Proposition \ref{Prop_DetecNec}, a necessary condition for Property UES-TL to hold 
and further, when some derivatives are bounded, for the existence of an exponentially convergent observer is that the system
(\ref{Aut_Syst_Output}) be infinitesimally detectable in the 
following sense
\begin{list}{}{%
\parskip 0pt plus 0pt minus 0pt%
\topsep 1ex plus 0pt minus 0pt%
\parsep 0.5ex plus 0pt minus 0pt%
\partopsep 0pt plus 0pt minus 0pt%
\itemsep 1ex plus 0pt minus 0pt
\settowidth{\labelwidth}{1em}%
\setlength{\labelsep}{0.5em}%
\setlength{\leftmargin}{\labelwidth}%
\addtolength{\leftmargin}{\labelsep}%
}
\item[\textbf{Infinitesimal detectability}]
We say that the system (\ref{Aut_Syst_Output}) is infinitesimally detectable 
if every solution of 
$$
\dot x =f(x)\ ,\quad\dot \de = \frac{\partial f}{\partial x}(x)\de \ ,\ \frac{\partial h}{\partial x}(x)\de=0
$$ 
defined on $[0,+\infty)$ satisfies $\lim_{t\rightarrow+\infty}|\dE(e,x,t)|=0$.
\end{list}
A similar necessary condition has been established in \cite{AndrieuBesanconSerres_CDC_13_NecObs}
for a larger class of observers but under an extra assumption (the existence of a locally quadratic Lyapunov function).

\par\vspace{1em}
\noindent \textbf{Example 1:}
{\itshape
Consider the planar system
\begin{equation}\label{eq_ExpObsSyst}
\dot x_1 \;=\;  x_2^3\ ,\quad\dot x_2 \;=\;  -x_1\ ,\quad y\;=\; x_1\ .
\end{equation}
We wish to know whether or not it is possible to design an 
exponentially convergent observer for this nonlinear oscillator in 
the form of (\ref{Observer_main_eq}).
The linearized system is
\begin{equation}
\label{LP14}
\dot \de_1 \;=\;  3x_2^2 \,  \de_2\ ,\quad\dot \de_2 \;=\;  -\de_1\ ,\quad  \tilde y \;=\;  \de_1
\end{equation}
This system is not detectable when the solution, along which we 
linearize, is the origin which is an equilibrium of (\ref{eq_ExpObsSyst}).
Consequently the system (\ref{eq_ExpObsSyst}) is not infinitesimally 
detectable on $\RR^2$ and so there is no exponentially convergent 
observer on $\RR^2$.
Fortunately the subset $\{x\in\RR^2:\,  \frac{x_1^2}{2} + 
\frac{x_2^4}{4} \geq \epsilon\}$, with $\epsilon>0$, is invariant and 
(\ref{eq_ExpObsSyst}) is infinitesimally detectable in it.

To design a correction term $K$ for an exponentially convergent 
observer, we use the property that
$$
L(y,x)\;=\; \frac{\partial K}{\partial y}(y,x)
$$
should be an observer gain for the linear system (\ref{LP14}). So we 
start our design by selecting $L$. We pick
$$
L(y,x) = \begin{bmatrix}
-3x_2^2\\
-3x_2^2 +1
\end{bmatrix} 
$$
This gives (see (\ref{eq_LinObsSysK}))
$$
A(x)\;=\; \frac{\partial f}{\partial x}(x) + \frac{\partial K}{\partial y}(h(x),x)\frac{\partial h}{\partial x}(x)
\;=\; 3 x_2 \begin{bmatrix}
-1 & 1\\
0 & -1 
\end{bmatrix} 
$$
The transition matrix generated by $A(X(x,t))$ when $X(x,t)$ is a 
solution of (\ref{eq_ExpObsSyst}) is
$$
\Phi(t,0)\;=\; \exp\left(-I(t)\right)
\begin{bmatrix}
1 & I(t)\\
0 & 1
\end{bmatrix} 
$$
where
$$
I(t)\;=\; 3\,  \int_0^t X_2(x,s)^2 ds
\  .
$$
Since $X_2(x,t)$ is periodic, (\ref{LP12}) holds when the initial 
condition $x$ is in the compact invariant subset
\begin{equation}
\label{LP16}
\mathcal{C}\;=\; \{x\in\RR^2:\,  \frac{1}{\varepsilon }\geq \frac{x_1^2}{2} + 
\frac{x_2^4}{4} \geq \epsilon\}
\  .
\end{equation}
Then, according to
Propositions \ref{Prop_ExistTensor} and 
\ref{Prop_Lyap}, and in view of (\ref{LP15}), we obtain an exponentially 
convergent observer by choosing $K$ as
$$
K(y,x)\;=\; \int_{x_1}^y L(s,x) ds\;=\; \begin{bmatrix}
-3x_2^2\\
-3x_2^2 +1
\end{bmatrix} (y-x_1)
\  .
$$
}
\par\vspace{1em}
Similarly, Property ULMTE involves the  existence of $P$ and $K$ such that 
(\ref{LP13}) holds. By restricting this inequality on quadratic 
forms to vectors which are in the kernel of $\frac{\partial 
h}{\partial x}$, we obtain as a consequence of Propositions 
\ref{Prop_DetecNec} and \ref{Prop_ExistTensor}  that a necessary 
condition for Property ULMTE  to hold 
and further, when some derivatives are bounded, for the existence of an exponentially convergent observer is that the system
(\ref{Aut_Syst_Output}) be R-detectable (R for Riemann) in the 
following sense.
\begin{list}{}{%
\parskip 0pt plus 0pt minus 0pt%
\topsep 1ex plus 0pt minus 0pt%
\parsep 0.5ex plus 0pt minus 0pt%
\partopsep 0pt plus 0pt minus 0pt%
\itemsep 1ex plus 0pt minus 0pt
\settowidth{\labelwidth}{1em}%
\setlength{\labelsep}{0.5em}%
\setlength{\leftmargin}{\labelwidth}%
\addtolength{\leftmargin}{\labelsep}%
}
\item[\textbf{R-Detectability}]
We say that the system (\ref{Aut_Syst_Output}) is R-detectable if
there
 exist  a continuous function $\PR :\RR^n\rightarrow\RR^{n\times n}$ and  positive
real numbers
 $0<\underline{p}\leq \overline{p}$
and $0<\underline{q}$ such that 
$P$ has a derivative $\der_{f}P$ along $f$ in the
sense of (\ref{LP11}) and we have
\begin{equation}
\label{LP17}
\underline{p}\,   I \leq  P ( x)\leq \overline{p}\,  I
\qquad \forall x \in \RR^n
\end{equation}
and
\begin{equation}\label{eq_ContrTangentOutputFunction}
v^\prime \der_f\PR (\dx)v +
2v^\prime \PR (\dx)\frac{\partial f}{\partial x}(x)v \leq  
-\underline{q}\,   v^\prime P(x) v
\end{equation}
 holds for all $(x,v)$ in $\RR^n\times \RR^n$ satisfying
$\frac{\partial h}{\partial x}(x)v=0$. 
\end{list}
A similar necessary condition has been established in 
\cite{SanfelicePraly_TAC_12}, where only 
asymptotic and not exponential convergence is assumed. In that case, 
the condition allows $\underline{p}$ and $q$ to be zero.


Further it is established in \cite{SanfelicePraly_Report_13RiemanObs} that when the R-detectability holds 
then
$$
K(y,x)\;=\; k\,  P(x)^{-1}\frac{\partial h}{\partial x}(x)^T (y-h(x))
$$
gives, for $k$ large enough, a (locally) exponentially convergent observer.
\par\vspace{1em}
\noindent \textbf{Example 1 continued:}
{\itshape
For the system (\ref{eq_ExpObsSyst}), the necessary R-Detectability 
condition is the existence of
$P=\left(\begin{array}{cc}
P_{11} & P_{12} \\ P_{12} &P_{22}
\end{array}\right)$
satisfying in particular  (\ref{eq_ContrTangentOutputFunction}) which 
is (see (\ref{LP16}))
\begin{equation}
\label{LP19}
\frac{\partial P_{22}}{\partial x_1} (x)\,  x_2^3
-\frac{\partial P_{22}}{\partial x_2} (x)\,  x_1
+ 6 P_{12} (x)\,  x_2^2 < -\underline{q}\,  P_{22}(x)
\quad \forall x\in \mathcal{C}
\end{equation}
We view this as a condition on $P_{22}$ only since whatever $P_{12}$ 
is, we can always pick $P_{11}$ to satisfy (\ref{LP17}). Note also 
that we can take care of any term with $x_2^2$ in factor by selecting 
$P_{12}$ appropriately. With this, it can 
be shown that it is sufficient to pick $P_{22}$ in the form
$$
\left( \  \frac{r(x)}{2}\; \leq \right)\quad P_{22}(x)\;=\; r(x) + \frac{x_1x_2}{\sqrt{r(x)}} + x_2^2
\quad \left(\   \leq \; 3 r(x)\right)
\  ,
$$
where the presence of $r$ defined below is justified by homogeneity 
considerations
$$
\left( \  4\varepsilon \; \leq \right)\quad
r(x)\;=\; \sqrt{2x_1^2 +x_2^4}
\quad \left(\   \leq \; \frac{4}{\varepsilon }\right)
\  .
$$
This motivates  \startmodifbj us to design \stopmodifbj 
$$
P_{12}(x)\;=\; -\frac{5}{24}\frac{x_2^2}{\sqrt{r(x)}} - \frac{1}{3} \sqrt{r(x)}
$$
In this case, the left hand side in the inequality (\ref{LP19}) is
\\[0.5em]$\displaystyle 
-\frac{x_1^2}{\sqrt{r}}
-\frac{1}{4}\frac{x_2^4}{\sqrt{r(x)}}
+2x_1x_2
-2 \sqrt{r(x)}x_2^2
$\hfill \null \\\null \hfill $\displaystyle 
\leq\, -\frac{1}{4} r(x) \sqrt{r(x)}
\, \leq\, -\frac{\sqrt{r(x)}}{12} P_{22}(x)
\, \leq \, -\frac{\sqrt{\varepsilon }}{6} P_{22}(x)
\  .
$
\par\vspace{1em}
Finally, by choosing
$$
P_{11}(x)\;=\; 2 +\frac{P_{12}(x)^2}{P_{22}(x)-\frac{r(x)}{4}}
\  ,
$$
it can be shown that we obtain
$$
\varepsilon \,  I\, \leq \, \min\left\{\!1, \frac{r(x)}{4}\!\right\}  I\, \leq \, P(x)\, \leq \,  
3\,  \max\{1,r(x)\}\,  I\, \leq \, \frac{12}{\varepsilon }
\:  .
$$
Hence (\ref{LP17}) holds on $\mathcal{C}$.
From this, the correction term
$$
K(y,x)\;=\; \frac{kP_{22}(x)}{P_{11}P_{22}(x)-P_{12}(x)^2}\,  \left(\begin{array}{c}
P_{22}(x) \\ -P_{12}(x)
\end{array}\right) (y-x_1)
$$
gives a (locally) convergent observer on $\mathcal{C}$.
}

\stopmodif
\subsection{Exponential synchronization}
\label{Sec_Synchronization}

 Finally, we revisit the synchronization problem as another class of control problems that can be dealt
with the
results in Section II. 
We consider here the synchronization of $m\geq 2$ identical systems given by  
\begin{equation}\label{eq_Syst}
\dot w_i=f(w_i)+g(w_i)u_i\ , \ i=1,\dots, m\ ,
\end{equation}
 In this setting, all systems  have the same drift vector field $f$ and the same control vector field $g:\RR^n\rightarrow\RR^{n\times p}$, but not the same controls in $\RR^p$. 
The state of the whole system is denoted $w=(w_1,\dots w_m)$ in $\RR^{mn}$. We define also the diagonal subset of $\RR^{mn}$
$$
\DR = \{(w_1,\dots, w_m)\in\RR^{mn}, w_1=w_2 \dots =w_m\}
$$
Given $w$ in $\RR^{mn}$, we denote the Euclidean distance to the set $\DR$ as $|w|_\DR$.
The  synchronization problem  that we consider in this section  is as follows. 
\begin{definition}\label{Def_Synchronization}
The control laws 
$u_i=\phi_i(w)$, $w=(w_1,\dots w_m)$, $i=1\dots, m$ solve the \startmodif \emph{local uniform exponential synchronization problem} for (\ref{eq_Syst}) if the following holds: \stopmodif
\begin{enumerate}
\item  $\phi$ is invariant by permutation of agents. More precisely, given a permutation $\pi:\{1,\dots m\}\mapsto \{1,\dots m\}$
$$
\phi_{\pi_i}(w_1,\dots,w_m) =  \phi_i(w_{\pi_1}, \dots, w_{\pi_m})
$$
\item $\phi$ is zero on $\DR$:
\begin{equation}
\label{LP22}
\phi(w)=0 \qquad \forall w \in \DR
\; ,
\end{equation}
\item and the 
set
 $\DR$ 
is uniformly exponentially stable
for the closed-loop system,
i.e., there exist positive 
real numbers
 $r_w$, $k$ and $\lambda>0$ such that, for all $w$ in $\RR^{mn}$ satisfying $|w|_\DR<r_w$,  
\begin{equation}\label{eq_Beta_exp}
|W(w,t)|_\DR  
\leq  k\exp(-\lambda t)\,  |w|_\DR,
\end{equation}
holds for all $t$ in the domain of existence of the solutions $W(w,t)$ 
going through
$w$ at $t=0$. 
\end{enumerate}When $r_w=\infty$, it is called the global uniform exponential synchronization problem.   
\end{definition}

In this context, we assume that 
 every agent shares an information \startmodif (which will be designed later) \stopmodif to all other agents (in which case, it forms a complete graph) and it has local access to its state variables. 

 \startmodif
It is possible to rewrite the property  of having the manifold $\DR$ exponentially stable as property TULES-NL.
As it has been done in the observer design context, employing Propositions \ref{Prop_DetecNec} and \ref{Prop_ExistTensor} and by rewriting properties UES-TL and ULMTE it is possible to give equivalent characterization of the synchronization property. 
By rewriting these conditions in a way in which the control law disappears, these properties give necessary
 conditions to achieve exponential synchronization.
 \stopmodif
\begin{proposition}[Necessary condition]
\label{prop2}
Consider  the systems in (\ref{eq_Syst}) and assume  the existence of control laws $u_i=\phi_i(w)$, $i=1,\dots m$ that solve the uniform exponential synchronization of (\ref{eq_Syst}).  
 Assume moreover that
$g$ is bounded and
 $f$, $g$ and the $\phi_i$'s have bounded first and second derivatives.
Then the following two properties hold. 
\begin{enumerate}
\item[Q1:] The origin of the transversally linear system
\begin{equation}
\dot \de = \frac{\partial f}{\partial x}(\dx)\de + g(\dx)u\ ,\ \dot
\dx = f(\dx)\ ,
\end{equation}
is stabilizable by a (linear in $\de$) state feedback.
\item[Q2:]
For every positive definite matrix $Q$, there exist a continuous
function $\PR :\RR^{n}\rightarrow\RR^{n\times n}$
and positive real numbers $\underline{p}$ and $\overline{p}$
such that inequalities (\ref{LP10})
are satisfied,
$P $ has a derivative $\der_{f}P$ along $f$ in the
sense of (\ref{LP11}),
and 
\begin{equation}\label{eq_TensorDerivSynch}
\der_fv^\prime \PR (x)v + 2v^\prime \PR (x)\frac{\partial f}{\partial x}(x)v \leq  -v^\prime Qv\ 
\end{equation}
holds for all $(v,x)$ in $\RR^{2n}$ satisfying $v^\prime \PR(x)g(x)=0$. 
\end{enumerate}
\end{proposition}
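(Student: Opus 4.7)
The overall strategy is to recast the synchronization hypothesis as a transverse exponential stability statement for the diagonal $\DR$, apply Propositions~\ref{Prop_DetecNec} and~\ref{Prop_ExistTensor}, and then exploit the permutation invariance of $\phi$ to extract the single-agent statements Q1 and Q2 from the resulting $(m-1)n$-dimensional conditions.

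Concretely, I would first change variables by setting $e_i := w_i - w_m$ for $i=1,\dots,m-1$ and $x := w_m$. The closed-loop system then takes the form $\dot e = F(e,x)$, $\dot x = G(e,x)$ with $\DR = \{e=0\}$ and $|e|$ equivalent to $|w|_\DR$, so~(\ref{eq_Beta_exp}) is exactly property TULES-NL. The boundedness of $g$ and of the first two derivatives of $f$, $g$ and the $\phi_i$'s, together with the identity $\phi_i(x,\dots,x)=0$ on $\DR$ (which makes the potentially unbounded term in $\partial G/\partial x(0,x)$ drop out), furnish the hypotheses~(\ref{LP1})--(\ref{LP2}) of Proposition~\ref{Prop_DetecNec} and~(\ref{LP5}) of Proposition~\ref{Prop_ExistTensor}.

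The next step is to write the transverse linearization explicitly using the symmetry of $\phi$. Setting $L_{ij}(x) := \frac{\partial \phi_i}{\partial w_j}(x,\dots,x)$, invariance under permutation of agents forces $L_{ii}(x)=L_0(x)$ and $L_{ij}(x)=L_1(x)$ for $i\neq j$, while differentiating $\phi_i(x,\dots,x)=0$ along $\DR$ yields $L_0(x) + (m-1)L_1(x) = 0$. A short computation then gives
\[
\frac{\partial F}{\partial e}(0,x) \;=\; I_{m-1} \otimes \bigl[A(x) + g(x)K(x)\bigr], \qquad K(x) := \tfrac{m}{m-1}L_0(x),
\]
with $A(x) := \frac{\partial f}{\partial x}(x)$, so the transverse linearization decouples into $m-1$ identical copies of $\dot{\de} = [A(\tilde x) + g(\tilde x) K(\tilde x)]\de$ driven by $\dot{\tilde x} = f(\tilde x)$. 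Property UES-TL, obtained from Proposition~\ref{Prop_DetecNec}, forces each copy to be uniformly exponentially stable, exhibiting $u = K(\tilde x)\de$ as the linear-in-$\de$ feedback required by Q1.

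For Q2, the same block-diagonal structure propagates to the fundamental matrix appearing in formula~(\ref{eq_P}), so by choosing a block-diagonal target $I_{m-1}\otimes Q$ in Proposition~\ref{Prop_ExistTensor} the Lyapunov matrix it produces has the form $I_{m-1}\otimes P_0(\tilde x)$, with $P_0$ continuous, bounded as $\underline p\, I \leq P_0 \leq \overline p\, I$, and satisfying
\[
\der_f P_0 + P_0(A + gK) + (A + gK)^\prime P_0 \;\leq\; -Q.
\]
For any $v$ with $v^\prime P_0(x) g(x) = 0$ the cross terms $v^\prime P_0 g K v$ and its transpose vanish, so this inequality collapses to~(\ref{eq_TensorDerivSynch}) with $\PR = P_0$, giving Q2. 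The main technical point is identifying the block-diagonal structure of the transverse linearization; once the relation $L_0 + (m-1)L_1 = 0$ and the index-independence inherited from permutation invariance are in hand, the remainder is a direct application of the results of Section~\ref{sec_NormExpManifold}.
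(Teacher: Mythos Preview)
Your proposal is correct and follows essentially the same route as the paper: reduce to an $(e,x)$-system with $e_i=w_i-w_{\text{ref}}$, verify the regularity bounds needed for Propositions~\ref{Prop_DetecNec} and~\ref{Prop_ExistTensor}, and use permutation invariance to show that $\frac{\partial F}{\partial e}(0,x)$ is block-diagonal with identical blocks $A(x)+g(x)K(x)$, from which Q1 and Q2 follow. The only cosmetic difference is that the paper extracts a single $n$-dimensional block \emph{before} invoking Proposition~\ref{Prop_ExistTensor}, whereas you apply it to the full $(m-1)n$-dimensional system with a block-diagonal target $I_{m-1}\otimes Q$ and then read off one block of the resulting $P$ via formula~(\ref{eq_P}); both arguments yield the same $P_0$.
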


\begin{proof}
First of all note that the vector fields having bounded first derivatives, it implies that the system is complete.
Consider $(i,j,k,l)$, $4$ integers  in $\{1,\dots, m\}$ and consider a permutation $\pi:\{1,\dots,m\}\mapsto \{1,\dots,m\}$ 
such that $\pi_i=k$ and $\pi_j=\ell$.
Note that $k=\ell$ if and only if $i=j$.
Note that the invariance by permutation implies
$$
\phi_k(w) = \phi_i(w_{\pi_1}, \dots, w_{\pi_m})\ .
$$
Hence, it follows that
$$
\frac{\partial \phi_k}{\partial w_\ell}(w) = \frac{\partial \phi_i}{\partial w_j}(w_{\pi_1}, \dots, w_{\pi_m}) 
$$
and if we consider $w$ in $\DR$, this implies
\begin{align*}
&\frac{\partial \phi_k}{\partial w_\ell}(w) = \frac{\partial \phi_i}{\partial w_j}(w) \ , i\neq j\ , \ k\neq \ell\\
&\frac{\partial \phi_j}{\partial w_j}(w) = \frac{\partial \phi_i}{\partial w_i}(w)
\end{align*}
 By denoting $e= (e_2, \dots e_m)$ with  $e_i=w_i-w_1$, $i=2, \dots m$ and $x=w_1$, 
we obtain an $(e,x)$-system of the type (\ref{eq_System}) with
\begin{eqnarray}
\label{eq_VecFieldSynchro1}F(e,x)&\hskip -0.8em =&(F_i(e,x))_{i=2,\dots m}\\
F_i(e,x)&\hskip -0.8em =&\hskip -0.8em
\nonumber f(x+e_i)-f(x)
\\
\label{eq_VecFieldSynchro2}&\hskip -0.8em
&\hskip -0.8em
+ g(x+e_i)\bar \phi_i(e,x)-g(x)\bar \phi_1(e,x)\ ,
\\[0.3em]
\label{eq_VecFieldSynchro3}G(e,x)&\hskip -0.8em
=&\hskip -0.8em
 f(x)+g(x)\bar \phi_1(e,x) \ ,
\end{eqnarray}
where we have used the notation
$$
\bar \phi_i(e,x) = \phi_i(x,x+e_2,\dots , x+e_n)
$$
Note that we have
\begin{align}
 |e|^2 &
\label{eq_EqNorme1}
\leq (m-1)|w|_\DR^2\ ,
\end{align}
and
\begin{align}
\nonumber |w|_\DR^2 
&\leq |e|^2 +(m-1)\left |\sum_{i=1}^m \frac{w_1- w_i}{m}\right |^2 \\
\label{eq_EqNorme2} &\leq\left  (1+\frac{m-1}{m^2}\right )|e|^2
\end{align}
Hence, (\ref{eq_Beta_exp}) implies for all $(e,x)$ with $|e|\leq \frac{mr_w}{\sqrt{m^2 + m-1}}$
$$
|E(e,x,t)|\leq  \sqrt{(m-1)\left  (1+\frac{m-1}{m^2}\right )} k\exp(-\lambda t)\,  |e|,
$$
It follows from the assumptions  of the proposition  that Property \textsf{TULES-NL} is
satisfied with $r=\frac{mr_w}{\sqrt{m^2 + m-1}}$ and that inequalities (\ref{LP1}) and
(\ref{LP2}) hold. We conclude with Proposition \ref{Prop_DetecNec} that Property  \textsf{UES-TL}  is
satisfied also. 
So, in particular,
there exist positive real numbers $\dk$ and
$\tilde \lambda$
such that any $e_i$ component of $(\dE(\de_0,\dx_0,t),\dX(\dx_0,t))$  
solution of (\ref{eq_System_dif})
satisfies, for all $(\de_0,\dx_0,t)$ in $\RR^{mn}\times\RR^{mn}\times
\RR_{\geq 0}$,
\begin{equation}\label{eq_dei}
|\dE_i(\de_0,\dx,t)|\leq  \dk \exp(-\tilde \lambda t)|\de_0|
\ .
\end{equation}
On another hand, with (\ref{LP22}), we obtain~:
\begin{equation}\label{eq_dFidi}
\frac{\partial F_i}{\partial e_i}(0,\dx)\;=\;
\frac{\partial f}{\partial x}(\dx)
+
g(\dx) \left[\frac{\partial \bar\phi_i}{\partial e_i}(0,\dx)
-
\frac{\partial \bar\phi_1}{\partial e_i}(0,\dx)
\right]\ .
\end{equation}
And, when $j\neq i$, it yields
\begin{equation}\label{eq_dFidj}
\frac{\partial F_i}{\partial e_j}(0,\dx)\;=\;
g(\dx) \left[\frac{\partial \bar\phi_i}{\partial e_j}(0,\dx)
-
\frac{\partial \bar\phi_1}{\partial e_j}(0,\dx)
\right]=0\ .
\end{equation}
Consequently,
any solution of the system
$$
\dot \de_i \;=\; \left [\frac{\partial f}{\partial x}(\dx)
+
g(\dx) \left[\frac{\partial \bar\phi_i}{\partial e_i}(0,\dx)
-
\frac{\partial \bar\phi_1}{\partial e_i}(0,\dx)
\right]\right ]\de_i\ ,
$$
and
$
\dot \dx = f(\dx)
$
can be expressed as an
$e_i$ component of $(\dE(\de_0,\dx_0,t),\dX(\dx_0,t))$  
solution of (\ref{eq_System_dif})
Since these solutions satisfy (\ref{eq_dei}),
Property Q1 does hold.

Finally
we consider the system with state $(e_i,x)$ in $\RR^{2n}$
\begin{equation}\label{eq_Sysei}
\dot e_i = \bar F_i(e_i,x)\ ,\ \dot x = G(e_i,x) = f(x)
\end{equation}
with $\bar F_i(e_i,x) = F_i((0,e_i,0),x)$.
The previous property and Proposition \ref{Prop_ExistTensor} imply
that Property \textsf{ULMTE} is satisfied for system (\ref{eq_Sysei}). So in particular we
have a function $P$ satisfying the properties in Q2 and
such that we have, for all $(v,x)$ in $\RR^n\times \RR^n$,
\\[0.7em]$\displaystyle
v^\prime \der_{f} \PR(\dx) v
$\hfill \\[0.3em]\null \hfill $\displaystyle
+
2 v^\prime \PR (\dx )
\left(\frac{\partial f}{\partial x}(\dx)
+
g(\dx) \left[\frac{\partial \bar\phi_i}{\partial e_i}(0,\dx)
-
\frac{\partial \bar\phi_1}{\partial e_i}(0,\dx)
\right]\right) v
$\hfill \\[0.3em]\null \hfill $\displaystyle
\leq -v^\prime Q v
$\\[0.7em]
which implies (\ref{eq_TensorDerivSynch})  when $v^\prime \PR (x)g(x)=0$.
\end{proof}

\startmodif
\noindent \textbf{Example 2:} \stopmodif
{\it
As an illustrative example consider the case in which the system is
given by by $m$ agents $w_i$ in $\RR^2$
with individual dynamics
\begin{equation}\label{eq_exemple}
\dot w_{i1} = w_{i2} + 2 \sin(w_{i2})\ ,\ \dot w_{i2} = a + u_i\ ,
\end{equation}
where $a$ is a real number.
Because of a singularity when $1+2\cos(w_{i2})=0$,
this system is not feedback linearizable per se.
Hence the design of a synchronizing controller may be involved.
}

{\it
In order to check if local synchronization in the sense of Definition \ref{Def_Synchronization} is possible, the  necessary conditions of Proposition \ref{prop2} may be tested.
The transversally linear system is
\begin{equation}\label{eq_exp}
\dot \de = \begin{bmatrix} 0 & 1+2\cos(x_0+a t)\\0&0 \end{bmatrix}\de + \begin{bmatrix}0\\1\end{bmatrix}u\ .
\end{equation}
When $a=0$ and $x_0=\frac{2\pi}{3}$, this system is not stabilizable 
by any feedback law. Hence in this case, with Proposition 
\ref{prop2}, there is no exponentially synchronizing control law in 
the sense of Definition \ref{Def_Synchronization}
satisfying (\ref{LP22}) in particular.
\par\vspace{1em}
}

Similar to the analysis of incremental stability in the previous
section
 and observer design in \cite{SanfelicePraly_TAC_12} ,
by using a function $\PR $ satisfying the property Q2 in Proposition \ref{prop2},  we can obtain sufficient conditions for the solvability of uniform exponential synchronization of (\ref{eq_Syst}). 

We do this under an extra assumption which is that, up to a scaling factor, the control vector field $g$ is a gradient field with $P$ as Riemannian metric.

\begin{proposition}[Local sufficient condition]
\label{prop3}
 Assume $f$  has bounded first and second derivatives,
and
$g$ is bounded and has bounded first and second derivatives. Moreover, assume that
\begin{list}{}{%
\parskip 0pt plus 0pt minus 0pt%
\topsep 0pt plus 0pt minus 0pt
\parsep 0pt plus 0pt minus 0pt%
\partopsep 0pt plus 0pt minus 0pt%
\itemsep 0pt plus 0pt minus 0pt
\settowidth{\labelwidth}{1.}%
\setlength{\labelsep}{0.5em}%
\setlength{\leftmargin}{\labelwidth}%
\addtolength{\leftmargin}{\labelsep}%
}
\item[1.]
there exist a $C^2$ function $U:\RR^n\to \RR$ 
and a bounded $C^2$ function $\alpha :\RR^n\to \RR^p$ which has bounded first and second derivative such
that
\begin{equation}
\label{LP24}
\frac{\partial U}{\partial x}(x)^\prime = P(x)g(x)\alpha (x)
\; ;
\end{equation}
holds for all $x$ in $\RR^n$; and
\item[2.]
there exist a positive definite matrix $Q$, a $C^2$ function $\PR :\RR^n\rightarrow\RR^{n\times n}$
with bounded derivative, and  positive
real numbers
$\underline{p}$, $\overline{p}$  and $\rho >0$
such that (\ref{LP10})
is satisfied
 and 
\\[0.7em]\null \hfill $\displaystyle 
v^\prime \der_f\PR (x)v +
2v^\prime \PR (x)\frac{\partial f}{\partial x}(x)v
- \rho \left|
\frac{\partial U}{\partial x}(x)
v\right|^2\leq  -v^\prime Qv
\  ,
$\\\null \hfill \refstepcounter{equation}\label{LP23}$(\theequation)$
\\[0.3em]
 holds for all $(x,v)$ in $\RR^n\times \RR^n$. 
\end{list}
Then there exist 
a real number
 $\underline{\kell}$  
 such that with the control laws $u_i=\phi_i(w)$ given by 
\\[0.7em]\null \hfill $\displaystyle 
\phi_i(w)=\kell\alpha (w_i)\,  \left [\sum_{j=1}^m \frac{U(w_j)}{m}-U(w_i)\right ]
$ \hfill \refstepcounter{equation}\label{eq_SyncConLaw}$(\theequation)$
\\[0.7em]
and $\kell\geq \underline{\kell}$ and if the closed loop system is complete then the local uniform exponential synchronization of (\ref{eq_Syst}) is solved. 
\end{proposition}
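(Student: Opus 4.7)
The plan is to recast the closed-loop system into the $(e,x)$ framework of Section \ref{sec_NormExpManifold}, verify the property ULMTE for the resulting system by using $P$ itself as a building block of the matrix field $\mathcal{P}$, and then invoke Proposition \ref{Prop_Lyap} to obtain TULES-NL, which is equivalent to the synchronization estimate (\ref{eq_Beta_exp}) via the norm equivalence (\ref{eq_EqNorme1})--(\ref{eq_EqNorme2}).

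First, taking $x=w_1$ and $e=(e_2,\dots,e_m)$ with $e_i=w_i-w_1$, the closed-loop system has the form (\ref{eq_System}) with $F_i,\,G$ given by (\ref{eq_VecFieldSynchro1})--(\ref{eq_VecFieldSynchro3}), where now $\bar{\phi}_i(e,x)=\kell\alpha(x+e_i)\bigl[\frac{1}{m}\sum_{j=1}^m U(x+e_j)-U(x+e_i)\bigr]$ with $e_1=0$. A direct calculation of $\frac{\partial \bar{\phi}_i}{\partial e_j}(0,x)$ for $i,j\in\{2,\dots,m\}$ shows that the contributions from the averaging term in $\bar{\phi}_i$ and $\bar{\phi}_1$ cancel when $j\neq i$, so the off-diagonal blocks vanish: $\frac{\partial F_i}{\partial e_j}(0,x)=0$ for $j\neq i$. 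On the diagonal, the averaging yields a net coefficient $-\frac{m-1}{m}-\frac{1}{m}=-1$, so using the hypothesis (\ref{LP24}) we get
\begin{equation*}
\frac{\partial F_i}{\partial e_i}(0,x)=\frac{\partial f}{\partial x}(x)-\kell\, g(x)\alpha(x)\frac{\partial U}{\partial x}(x)=\frac{\partial f}{\partial x}(x)-\kell\, P(x)^{-1}\frac{\partial U}{\partial x}(x)^\prime\frac{\partial U}{\partial x}(x).
\end{equation*}

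Second, since the transversal linearization is block-diagonal with identical blocks, take $\mathcal{P}(x)=I_{m-1}\otimes P(x)$. For each block $i$, the Lyapunov-like matrix inequality in (\ref{eq_TensorDerivative}) reduces to requiring
\begin{equation*}
v^\prime \der_f P(x)v+2v^\prime P(x)\frac{\partial f}{\partial x}(x)v-2\kell\left|\frac{\partial U}{\partial x}(x)v\right|^2\leq -v^\prime Qv\qquad \forall (v,x)\in\RR^n\times\RR^n,
\end{equation*}
where we used $P(x)\cdot P(x)^{-1}=I$ to reduce the correction term. Comparing with hypothesis (\ref{LP23}), this holds provided $\kell\geq \underline{\kell}:=\rho/2$; setting $\mathcal{Q}=I_{m-1}\otimes Q$, the property ULMTE for the $(e,x)$-system is verified, with the uniform bounds (\ref{LP7}) inherited from (\ref{LP10}).

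Third, I would check the derivative bounds required by Proposition \ref{Prop_Lyap}: the bound on $\frac{\partial P}{\partial x}$ follows from the boundedness assumption on the derivative of $P$, and the second-derivative bounds on $F$ together with the bound on $\frac{\partial G}{\partial e}$ follow from the boundedness of first and second derivatives of $f$, $g$, $U$, $\alpha$ and the boundedness of $g$, $\alpha$ themselves (which keep the $\kell$-dependent terms in $\bar\phi_i$ and $\partial\bar\phi_i/\partial e_j$ uniformly bounded). Proposition \ref{Prop_Lyap} then yields TULES-NL for the $(e,x)$-system with some radius $r>0$; translating through (\ref{eq_EqNorme1})--(\ref{eq_EqNorme2}) gives (\ref{eq_Beta_exp}) with $r_w=r/\sqrt{(m-1)(1+(m-1)/m^2)}$, completing the proof. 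The main obstacle will be the careful bookkeeping in step one to confirm that the cross terms in the linearization really do cancel; once this is in place, the rest is an application of Proposition \ref{Prop_Lyap} with the natural block-diagonal Lyapunov matrix field.
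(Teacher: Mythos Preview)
Your proposal is correct and follows essentially the same route as the paper: recast the closed loop in the $(e,x)$ form (\ref{eq_VecFieldSynchro1})--(\ref{eq_VecFieldSynchro3}), observe that $\frac{\partial F}{\partial e}(0,x)$ is block diagonal with identical blocks $\frac{\partial f}{\partial x}(x)-\ell\, g(x)\alpha(x)\frac{\partial U}{\partial x}(x)$, take $P_m(x)=\texttt{Diag}(P(x),\dots,P(x))$, use (\ref{LP23}) to get (\ref{eq_TensorDerivative}) for $\ell\geq \rho/2$, and conclude by Proposition~\ref{Prop_Lyap} together with (\ref{eq_EqNorme1})--(\ref{eq_EqNorme2}).

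One point of care in your step~3: the quantity $\bar\phi_i(e,x)$ is \emph{not} uniformly bounded in $(e,x)$, since $U$ itself is not assumed bounded; what is true is that the bracket $\bigl[\frac{1}{m}\sum_j U(x+e_j)-U(x+e_i)\bigr]$ vanishes at $e=0$ and, because $\partial U/\partial x$ is bounded, satisfies a bound of the form $\tilde c\,|e|$. Hence the second derivatives of $F$ are bounded by $\tilde c\,|e|+\tilde c$, and you must first fix an arbitrary $\eta>0$ and set $c=\tilde c^2\eta+\tilde c$ to obtain (\ref{LP6}) on $B_e(\eta)\times\RR^{n_x}$ before invoking Proposition~\ref{Prop_Lyap}. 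This is exactly how the paper handles it; your phrasing ``uniformly bounded'' should be understood in this local sense.
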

\par\vspace{1em}
Note that, for its implementation, the control law (\ref{eq_SyncConLaw}) 
requires that each agent $i$ communicates $U(w_i)$ to all the
other agents.
\begin{proof}
First of all, note that the control law $\phi_i$ is invariant by permutation due to its structure.
Let $e=(e_2,\dots, e_m)$ with $e_i=x_1-x_i$ and $x=x_1$.
We obtain an $(e,x)$-system of the type (\ref{eq_System}) 
%
with $F$ and $G$
as
defined in (\ref{eq_VecFieldSynchro1}-\ref{eq_VecFieldSynchro2}-\ref{eq_VecFieldSynchro3}) with $\phi$ as control input.
For this system, we will show  that property ULMTE is satisfied.
Consider the function $P_m:\RR^n\rightarrow\RR^{(m-1)n\times(m-1)n}$ defined as a block diagonal matrix composed of $(m-1)$ matrices $P$. 
i.e. $P_m(x) = \texttt{Diag}(P(x),\dots,P(x))$.
Note that with property (\ref{eq_dFidi}) and (\ref{eq_dFidj}), it yields that $
\frac{\partial F}{\partial e}(0,\dx)$ is also $(m-1)$ block diagonal.
Hence,
we have
\\[0.7em]$\displaystyle 
\der_{\dG} \PR_m(\dx) +
\PR_m (\dx ) \frac{\partial F}{\partial e}(0,\dx)
+ \frac{\partial F}{\partial e}(0,\dx)^\prime \PR_m(\dx) 
$\hfill \null \\\null \hfill $\displaystyle 
=
\texttt{Diag}\{R(\dx)),\dots, R(\dx))\}
$\\[0.7em]
where 
\\[0.7em]$\displaystyle 
R(\dx) =  \der_f P(\dx) v +  P(\dx)\left [\frac{\partial f}{\partial x}(\dx)
-\ell
g(\dx) \alpha(\dx)\frac{\partial U}{\partial x}(\dx)
\right ] 
$\hfill \null \\\null \hfill $\displaystyle +
\left [\frac{\partial f}{\partial x}(\dx)
-\ell
g(\dx) \alpha(\dx)\frac{\partial U}{\partial x}(\dx)
\right ] ^\prime P(\dx)
\  .
$\\[0.7em]
With (\ref{LP23}), this gives
\\[0.7em]\null \hfill $\displaystyle 
v^\prime
R(\dx)
 v \leq -v^\prime Qv + (k-2\ell) \left |\frac{\partial U}{\partial x}(\dx)
v
\right |^2\ .
$\hfill \null \\[0.7em]
for all $(\dx,v)$ in $\RR^n\times \RR^n$. 
Hence, picking $\ell>\frac{k}{2}$, inequality (\ref{eq_TensorDerivative}) holds.
To apply proposition \ref{Prop_Lyap}, it remains to show that inequalities (\ref{LP5}), (\ref{LP8}) and (\ref{LP6}) are satisfied.
Note that employing the bounds on the functions $P$, $f$, $g$, $\alpha$ and there derivatives, it is possible to get a positive real number $\tilde c$ (depending on $\ell$) such that for all $i$ in $2,\dots, m$ and all $(e,x)$
\begin{eqnarray*}
\left|\frac{\partial F_i}{\partial e_i}(e,x)\right| &\leq &\tilde c\left |\sum_{j=1}^m\frac{U(x+e_i)}{m}-U(x+e_i)\right | +\tilde c\ ,\ \\
\left|\frac{\partial F_i}{\partial e_i\partial x}(e,x)\right| &\leq &\tilde c\left |\sum_{j=1}^m\frac{U(x+e_i)}{m}-U(x+e_i)\right | +\tilde c\ ,\\
\end{eqnarray*}
\begin{eqnarray*}
\left|\frac{\partial F_i}{\partial e_i\partial e_j}(e,x)\right| &\leq&\tilde c\left |\sum_{j=1}^m\frac{U(x+e_i)}{m}-U(x+e_i)\right | +\tilde c\ ,\ 
\end{eqnarray*}
$$
\left |\sum_{j=1}^m\frac{U(x+e_i)}{m}-U(x+e_i)\right |\leq \tilde c|e| \ ,\ \forall (e,x)
$$
So we fix $\eta $ positive and pick $c = \tilde c^2\eta + 
\tilde c$. The above shows that
inequalities  (\ref{LP8}) and (\ref{LP6}) are satisfied.
With  Proposition \ref{Prop_Lyap}, we conclude that
 Property \textsf{TULES-NL} holds. Hence $e=0$ is (locally) exponentially stable manifold.
With inequalities (\ref{eq_EqNorme1}) and (\ref{eq_EqNorme2}) this implies that inequality (\ref{eq_Beta_exp}) holds.
\end{proof}
\par\vspace{1em}

In this result it is important to remark that there is no guarantee that the control law given here \startmodif ensures completeness \stopmodif of the solution.
Note however, that on the manifold $|w|_\DR=0$, the trajectories satisfy $\dot x = f(x)$ which is a complete system

\startmodif \noindent\textbf{Example 2 (continued):}
{\it
We come back to the example (\ref{eq_exemple}) in the case where 
$a=1$.
We note that the linear system (\ref{eq_exp}) is stabilizable by a feedback in the form
$$
u = -(1+2(\cos(x_0+t)) \begin{bmatrix}2&3\end{bmatrix}\de
$$}\stopmodif{\it
Indeed,  the solution of (\ref{eq_exp}) with the previous feedback satisfies
$$
\dot \de = (1+\cos(x_0+t))\begin{bmatrix}
0 & 1\\ -2 & -3
\end{bmatrix}
\de\ .
$$
Hence its solution are
$
\dE(e,x,t) = \psi(x,t)\de
$,
where $\psi$ is
 the generator of this time varying linear system given as
\begin{align*}
\psi(x,t) &= \exp\left ( (t+2\sin(x+t)\begin{bmatrix} 0 & 1\\-2&-3 \end{bmatrix}\right )\\
&= \varphi(x,t)^{-2}\begin{bmatrix}
-1+2\varphi(x,t) & -1 + \varphi(x,t)\\
2(1-\varphi(x,t)) &  2-\varphi(x,t)
\end{bmatrix}
\end{align*}
with $\varphi(x,t) = e^{t + 2\sin(t+x)}$.
}

{\it
Consequently, we get that $\de$ goes exponentially to zero. Hence Property $Q_1$ is satisfied.
}

{\it
We can then introduce the matrix $P$ solution to Q2  and given in (\ref{eq_P}) as
\begin{equation}\label{eq_P_exmple}
P(x) = \int_0^{+\infty}  \psi(x,s)^\prime \psi(x,s)ds \ .
\end{equation}
This matrix is positive definite and satisfies property Q2.
So we may want to use it for designing
an exponentially synchronizing control law.
With decomposing the $2\times 2$ matrix $P$ as
$$
P(w_i) = \begin{bmatrix}
P_{11}(w_{i2}) & P_{12}(w_{i2})\\P_{12}(w_{i2}) & P_{22}(w_{i2})
\end{bmatrix}\ ,
$$
we obtain
$
P(w)g(w) = \begin{bmatrix}
P_{12}(w_{i2})&P_{22}(w_{i2})
\end{bmatrix}^\prime
$.
Note that it can be shown (numerically) that
$$
P_{12}(w_{i2}) = \int_0^\infty \frac{4}{\varphi(w_{i2},t)^2} - \frac{9}{\varphi(w_{i2},t)^3} +\frac{5}{\varphi(w_{i2},t)^4} dt > 0\ .
$$
It follows the that function 
$\alpha(w) =  \frac{1}{P_{12}(w_{i2})}$
 is well defined and setting
$$
U(w_i) = w_{i1} + \int_0^{w_{i2}} \frac{P_{22}(s)}{P_{12}(s)}ds
$$
property (\ref{LP24}) is satisfied.
Hence, for this example, picking $\ell$ a sufficiently large real number,
the control law (\ref{eq_SyncConLaw}) ensures
local
exponential synchronization of $m$ agents.
We have checked this via simulation for the case
$m=5$, $\ell = 3$. 
The time evolution of the solution with $w_i(0)$, $i=1,\ldots, 5$ 
chosen randomly according to a uniform
distribution on $[0,10]$
is shown in
Figure \ref{fig:example1}a for $w_{i1}$ and \ref{fig:example1}b for 
$w_{i2}$.
}
\begin{figure}[h]
\centering
  \subfigure[The plot of $w_{i1}$, $i=1,\ldots 5$]{\includegraphics[height=1.8in]{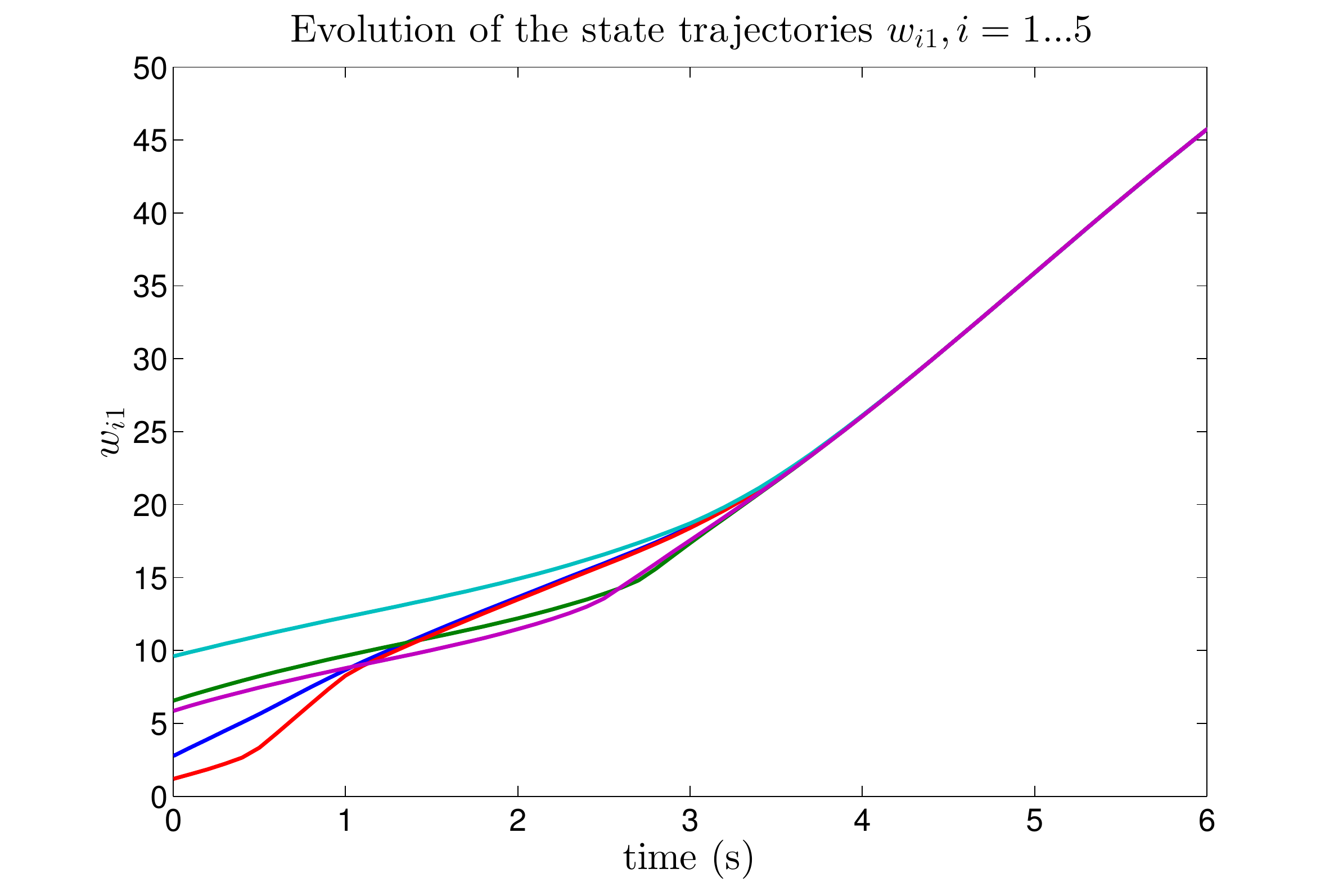}}
  \subfigure[The plot of $w_{i2}$, $i=1,\ldots 5$]{\includegraphics[height=1.8in]{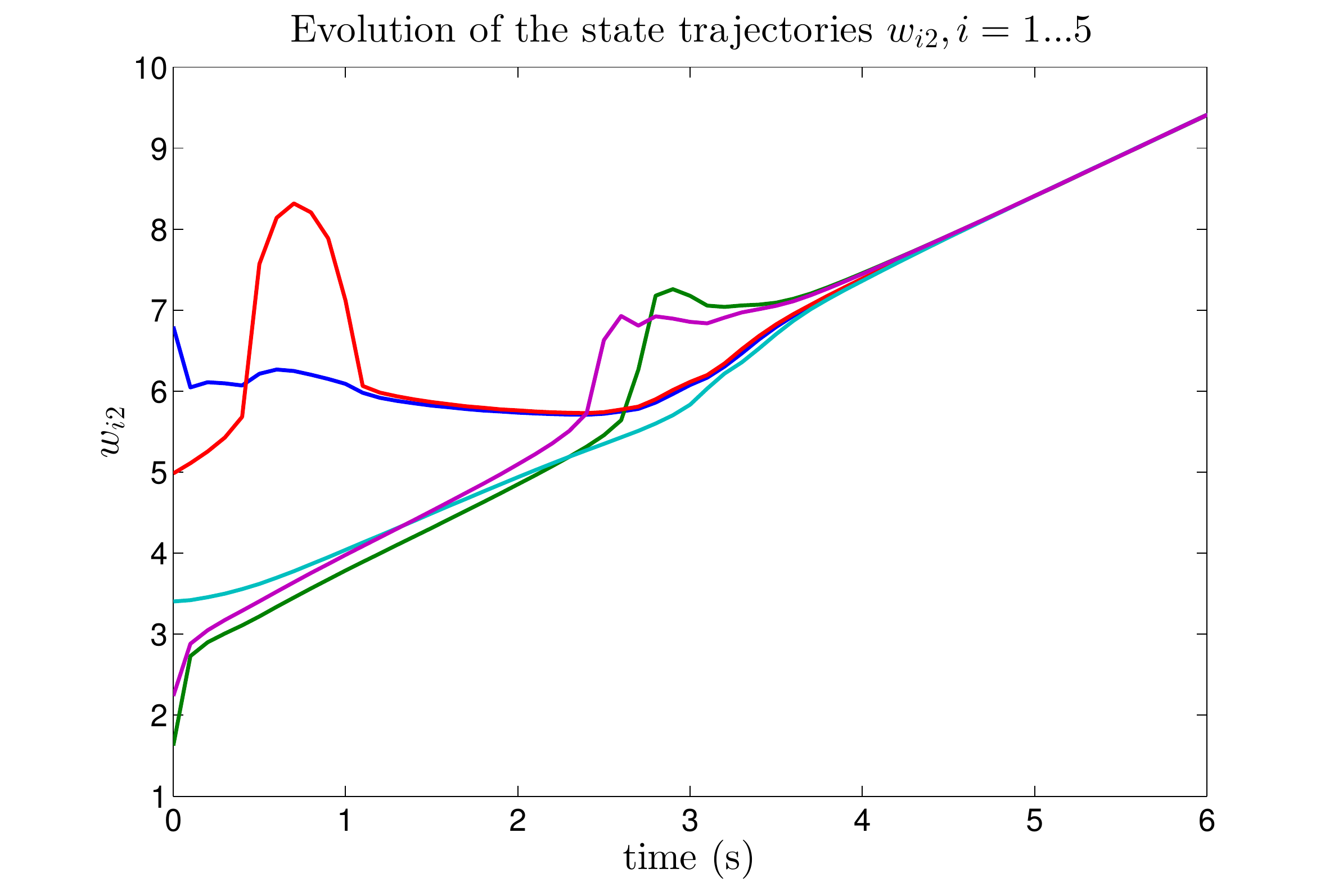}}
\caption{  Numerical simulation results of the
interconnected five agents as considered in Example 1.  The
left-figure shows the trajectories of the first state variable of each
agent, $w_{i1}, i=1\ldots 5$ while the right-figure shows those of the
second state variable of each agent $w_{i2}, i=1\ldots 5$.  The
simulation results show that the proposed contraction-based
distributed control law is able to synchronize the states $w_{i}$,
$i=1\ldots 5$.}
\label{fig:example1} 
\end{figure}
\par\vspace{1em}

As in the context of the observer design
given in \cite{SanfelicePraly_TAC_12}, 
a global result can be obtained
by imposing a further constraint on $P$. 
Specifically, the notion we need to introduce is the following.
\begin{definition}[Totally Geodesically Set]
Given 
a $C^2$ 
function $P$ defined on $\RR^n$ the values of which are symmetric 
positive definite matrices,
a $C^1$ function $\varphi:\RR^n\rightarrow\RR_+$ and a real number $\bar\varphi$, 
the  (level)  set $S = \{x\in\RR^n,\varphi(x)=\bar{\varphi}\}$   is 
said to be totally geodesic
with respect to $P$
 if, for any  $(x,v)$ in $S\times\RR^n$    such that  $$
\frac{\partial \varphi}{\partial x}(x)v =  0, v^\prime P(x)v=1\ ,
$$ 
any geodesic $\gamma$,
i.e. a solution of
\\[1em]$\displaystyle 
\frac{d}{ds}\left(\frac{d\gamma  ^*}{ds}(s)^\prime
P(\gamma ^*(s))\right)l 
$\refstepcounter{equation}\label{LP35}\hfill$(\theequation)$
\\\null \hfill $\displaystyle \;=\; 
\frac{1}{2}\frac{\partial }{\partial x}\left.\left(
\frac{d\gamma ^*}{ds}(s)^\prime
P(x)
\frac{d\gamma ^*}{ds}(s)\right)\right|_{x=\gamma^*(s)}
\  ,
$\\[1em]
with $\gamma(0)=x$ and $\frac{d\gamma}{ds}(0)=v$ satisfies
$$
\frac{\partial \varphi}{\partial x}(\gamma(s))\frac{d\gamma}{ds}(s) =  0\ , \forall s\ .
$$
\end{definition}

For the case of two agents only, we have the following.

\begin{proposition}[Global sufficient condition \startmodifOLD for $m=2$\stopmodifOLD]\label{Prop_GlobSynch}
Assume
\begin{list}{}{%
\parskip 0pt plus 0pt minus 0pt%
\topsep 0pt plus 0pt minus 0pt
\parsep 0pt plus 0pt minus 0pt%
\partopsep 0pt plus 0pt minus 0pt%
\itemsep 0pt plus 0pt minus 0pt
\settowidth{\labelwidth}{1.}%
\setlength{\labelsep}{0.5em}%
\setlength{\leftmargin}{\labelwidth}%
\addtolength{\leftmargin}{\labelsep}%
}
\item[1.]
there exist a  $C^3$  function $U:\RR^n\to \RR$ which
has bounded first and second derivatives,
and a $C^1$ function $\alpha :\RR^n\to \RR^p$ such
that, for all $x$ in $\RR^n$, (\ref{LP24}) is satisfied;
\item[2.]
there exist a positive real number $\lambda$, a   $C^3$   function $\PR :\RR^n\rightarrow\RR^{n\times n}$
and positive real numbers
$\underline{p}$ and $\overline{p}$,
such that  inequalities (\ref{LP10})
hold and we have,
for all $(x,v)$ in $\RR^n\times \RR^n$ such that $\frac{\partial U}{\partial x}(x)^\prime v=0$
\begin{equation}
\label{ArtsteinGlob}
\hskip -1em
\frac{1}{2}\,  v^\prime \der_f\PR (x)v +
v^\prime \PR (x)\frac{\partial f}{\partial x}(x)v
\leq  -
\,  \lambda \,  
 v^\prime P(x)v
\  ,
\end{equation}
\item [3.] For all $\bar U$ in $\RR$, the set $S=\{x\in\RR^n, U(x)=\bar U\}$ is totally geodesic with
respect to $P$.
\end{list}
Then
 there exists a function $\kell:\RR^{2n}\rightarrow\RR_+$, invariant by permutation 
such that, with the controls given by
$$
\phi_i(w)=\kell(w)\alpha (w_i)\,  (U(w_j)-U(w_i))
\  ,
$$
with $(i,j)\in \{(1,2), (2,1)\}$
the following holds and for all $w$ in $\RR^{2n}$,
\begin{equation}
|W(w,t)|_{\DR}\leq k|w|_\DR
\exp\left(-\frac{\lambda }{2}t\right)
\  ,
\end{equation}
where $t$ is any positive real number in the time
domain of definition of the closed loop solution.
\end{proposition}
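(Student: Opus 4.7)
The plan is to use the Riemannian distance $d$ induced by $P$ and to work with the Lyapunov function $V(w):=\tfrac12 d(w_1,w_2)^2$. The goal is to show, for a suitable state-dependent gain $\kell(w)$, that $\dot V \leq -\lambda V$ along the closed-loop flow; combined with the metric equivalence (\ref{LP10}), this will give the claimed $\exp(-\tfrac{\lambda}{2}t)$ decay of $|W(w,t)|_{\DR}$.

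The first step is the first variation. Let $\gamma:[0,1]\to\RR^n$ be a minimizing constant-speed geodesic with $\gamma(0)=w_1$ and $\gamma(1)=w_2$. Since $\gamma$ satisfies (\ref{LP35}), the interior acceleration term vanishes, and
$$\dot V = \langle \dot w_2,\dot\gamma(1)\rangle_{P(w_2)} - \langle \dot w_1,\dot\gamma(0)\rangle_{P(w_1)}.$$
Substituting $\dot w_i=f(w_i)+g(w_i)u_i$ splits this into a drift and a control contribution. Differentiating $\psi(s):=f(\gamma(s))^\prime P(\gamma(s))\dot\gamma(s)$ and using (\ref{LP35}) to eliminate $P\ddot\gamma$, the drift contribution equals
$$\int_0^1 \Bigl[\tfrac12\dot\gamma^\prime\der_f P\,\dot\gamma + \dot\gamma^\prime P\tfrac{\partial f}{\partial x}\,\dot\gamma\Bigr](\gamma(s))\,ds,$$
precisely the left-hand side of (\ref{ArtsteinGlob}) integrated along $\gamma$. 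Using (\ref{LP24}) to replace $P g\alpha$ by $(\partial U/\partial x)^\prime$, the control contribution becomes
$$\kell(w)\bigl(U(w_1)-U(w_2)\bigr)\Bigl[\tfrac{\partial U}{\partial x}(w_2)\dot\gamma(1) + \tfrac{\partial U}{\partial x}(w_1)\dot\gamma(0)\Bigr].$$

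The next step is a key lemma extracted from Assumption~3: the scalar $\phi(s):=\tfrac{\partial U}{\partial x}(\gamma(s))\dot\gamma(s)$ is constant along every geodesic, hence equal to $\Delta U := U(w_2)-U(w_1)$ by the fundamental theorem of calculus. Granting this, the control contribution reduces to $-2\kell(w)\Delta U^2$. Decomposing $\dot\gamma = \dot\gamma_\perp + \Delta U\,N(\gamma)$ with $N := P^{-1}(\partial U/\partial x)^\prime/\|\nabla U\|_P^2$, so that $\tfrac{\partial U}{\partial x}\dot\gamma_\perp \equiv 0$, I then apply (\ref{ArtsteinGlob}) to the $\perp$-$\perp$ block of the drift integral, and bound the mixed and $\parallel$-$\parallel$ blocks by $C(w)\Delta U^2$ using the boundedness assumptions on $f,g,P,U$ and Young's inequality. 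Together with the control contribution this yields
$$\dot V \leq -\lambda V + \bigl[C(w) + \lambda N^\prime PN - 2\kell(w)\bigr]\Delta U^2,$$
and choosing $\kell(w)$ large enough to make the bracket non-positive closes the inequality. The permutation invariance $\kell(w_1,w_2)=\kell(w_2,w_1)$ is automatic since every term in the bracket is symmetric in $w_1,w_2$.

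The main obstacle is proving the lemma, i.e.\ constancy of $\phi$ along geodesics. Geometrically, Assumption~3 says that each level set of $U$ is a codimension-one totally geodesic leaf, which is the infinitesimal statement that the $P$-normalized direction $\nabla U/\|\nabla U\|_P$ is parallel-transported by the geodesic flow. The proof proceeds in two stages: first, if $\phi(s_0)=0$ for some $s_0$, the very definition of total geodesicity applied to the level set $\{U=U(\gamma(s_0))\}$ forces the whole geodesic to stay in that leaf, so $\phi\equiv 0$; second, non-vanishing is upgraded to strict constancy by considering a one-parameter family of geodesics whose endpoints move along integral curves of $\nabla U$ and differentiating the conserved-speed identity $\dot\gamma^\prime P\dot\gamma\equiv d^2$, together with a Gauss-type orthogonality computation. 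A secondary technicality is the possible non-smoothness of $d$ in the global setting; I would handle it by replacing the classical time derivative of $V$ by its upper Dini derivative and invoking a standard comparison argument.
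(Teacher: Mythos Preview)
Your proof hinges on the ``key lemma'' that $\phi(s):=\frac{\partial U}{\partial x}(\gamma(s))\,\dot\gamma(s)$ is \emph{constant} along every geodesic.  This is false under the hypotheses of the proposition, and the rest of your argument (the decomposition $\dot\gamma=\dot\gamma_\perp+\Delta U\,N(\gamma)$, the reduction of the control term to $-2\kell\,\Delta U^2$, and the subsequent Young-inequality bound) collapses with it.  A concrete counterexample: take $\RR^2$ with $P=I$, $U(x)=x_1+\varepsilon\sin x_1$ with $0<\varepsilon<1$ (so $\partial U/\partial x$ never vanishes and has bounded first and second derivatives), $g=(1,0)^\prime$, $\alpha(x)=1+\varepsilon\cos x_1$, and $f(x)=-x$.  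The level sets of $U$ are the vertical lines $\{x_1=c\}$, which are straight lines and hence totally geodesic; condition (\ref{ArtsteinGlob}) holds with $\lambda=1$.  Yet for the horizontal geodesic $\gamma(s)=(s,0)$ one gets $\phi(s)=1+\varepsilon\cos s$, which is not constant.  Your ``second stage'' sketch cannot be repaired: the totally geodesic assumption says only that the unit normal is parallel \emph{along each leaf} (i.e.\ $\nabla_X N=0$ for $X$ tangent to the level set), not that it is parallel in the transverse direction, which is what constancy of $\phi$ along a transverse geodesic would require.

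The paper's proof does not need constancy of $\phi$.  It first uses a compactness argument to rewrite (\ref{ArtsteinGlob}) as a global inequality with a penalty term $\nu(x)\bigl|\frac{\partial U}{\partial x}(x)v\bigr|^2$, giving a bound of the form $\dot d+\lambda d\leq B(w)-\kell(w)A(w)$, where $A$ is (up to sign) your control term and $B$ is the integrated penalty.  The totally geodesic hypothesis is then used only to prove the sign implication ``$A(w)\geq 0$, and $A(w)=0\Rightarrow B(w)=0$'': if $\phi$ vanishes at one point it vanishes identically (your first stage), and if $A\leq 0$ one shows $\phi$ must vanish somewhere by an intermediate-value argument, reducing to the first case.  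From this implication, a second compactness/contradiction argument on the sets $C_{i_1 i_2}$ produces a gain $\kell_{i_1 i_2}$ making $B-\kell A\leq \lambda/2$ on each such set, and patching these together gives the state-dependent $\kell(w)$.  So the correct route is qualitative (sign of $A$ controls $B$) rather than the quantitative identity you attempted.
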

\par\vspace{1em}
The  
proof of this result is given in Appendix 
\ref{Sec_ProofPropGlobSynch}. It
borrows some ideas of
\cite{SanfelicePraly_TAC_12}.
However,
different from \cite{SanfelicePraly_TAC_12}, we have here
a global convergence result. This follows
from the fact that in the high gain parameter $\kell$, the norm of the full state space can be 
used
 (and not
only the norm of the estimate
as in the observer case
).

Note that nothing is said about the
domain
 of existence of the solution.

\section{Conclusion}
\label{Sec_Conclusion}
We have studied the relationship between the exponential stability of an invariant manifold and the existence of a Riemannian metric for which the flow is ``transversally'' contracting.
It was shown that the following properties are  equivalent  
\begin{enumerate}
\item A manifold is ``transversally'' exponentially stable;
\item The ``transverse'' linearization along any solution in the manifold is exponentially stable;
\item There exists
a field of positive definite quadratic forms whose
 restrictions to the transverse direction to the
manifold are decreasing along the flow.
\end{enumerate}
\startmodifbj As an illustrative example for these equivalence results, we have revisited the property of exponential incremental stability where we can obtain a global result. 

The characterization of transverse exponential stability has allowed us to investigate a necessary condition for two different control problems of nonlinear observer design and of synchronization of nonlinear multi-agent systems which leads to a novel constructive design for each problem. Recent result by others has also shown the applicability of our results beyond these two control problems. \stopmodifbj 
Although the main results hold for local uniform transverse exponential stability,
we show that global results can also be obtained in some particular cases. The extension of all the results to the global case is currently under study. 

\section*{Acknowledgement}
 The authors would like to thank the anonymous reviewers for their helpful comments.

\appendix

\subsection{Proof of Proposition \ref{Prop_DetecNec}}
\label{SecProof_Prop_DetecNec}
\begin{proof}
Let us start with some estimations.
Let $z=e-\de$. Along solutions of
(\ref{eq_System}) and (\ref{eq_System_dif}), we have
$$
\dot z = F(e,x)-\frac{\partial F}{\partial e}(0,\dx)\de =
\frac{\partial F}{\partial e}(0,\dx)z + \Delta(x,e,\dx)
$$
with the notation
\begin{eqnarray*}
\Delta(e,x,\dx)
&\hskip -0.3em =& \hskip -0.3em F(e,x)-\frac{\partial F}{\partial e}(0,\dx)e
\  ,
\\[0.1em]
&\hskip -0.3em =&\hskip -0.3em \left[ F(e,x)-F(e,\dx) \right]
\\&\hskip -0.3em &   
+\; \left[ F(e,\dx)- F(0,\dx)-\frac{\partial F}{\partial e}(0,\dx)e\right]\ .
\end{eqnarray*}
Note that the manifold $\mathcal{E}:=\{(e,x):\,  e=0\}$ being invariant, it yields
\begin{equation}
\label{LP3}
F(0,x) = 0\qquad \forall x
\  .
\end{equation}
With Hadamard's lemma, (\ref{LP3}) and (\ref{LP2}), we obtain the existence of
positive real numbers $c_1$ and $c_2$ such that,
for all $(e,x,\dx)$ in $B_e(kr)\times\RR^{n_x}\times\RR^{n_x}$,
$$
|\Delta(e,x,\dx)|\leq c_1 |e|^2+ c_2 |e||x-\dx|
\ .
$$
This, with (\ref{LP5}), gives,
for all $(e,\de,x,\dx)$ in
$B_e(kr)\times\RR^{n_e}\times\RR^{n_x}\times\RR^{n_x}$,
\footnote{
\label{foonote1}
Here the notation $\dot{\overparen{|z|}}$ is 
abusive. The function $x\mapsto |x|$ is not $C^1$ but only 
Lipschitz. Nevertheless given a vector field $f$ an upper right Dini Lie derivative, i.e.
$
\limsup_{h\to 0_+} \frac{|x+h f(x)|-|x|}{h}
$
does exist and, by the triangle inequality, we have
$$
-|f(x)|\leq \limsup_{h\to 0_+} \frac{|x+h f(x)|-|x|}{h}\leq |f(x)|\  .
$$
So here and in the following $\dot{\overparen{|x|}}$ denotes this 
upper right Dini Lie derivative.
} 
\begin{equation}\label{eq_dot_z_upperbound}
\dot{\overparen{|z|}}\leq \mu|z|+c_1|e|^2 + c_2 |e||x-\dx|
\ .
\end{equation}
Similarly (\ref{eq_System}), (\ref{eq_System_dif}) and (\ref{LP2})
give,
for all $(e,x,\dx)$ in $B_e(kr)\times\RR^{n_x}\times\RR^{n_x}$,
\\[0.7em]$\displaystyle
\dot{\overparen{|x-\dx| }}
$\hfill \null
\\[0.3em]\null\hfill$
\begin{array}[b]{@{}cl@{}}
\leq &\displaystyle
|G(e,x)-G(0,x)|\;+\; |G(0,x)-G(0,\dx)|
\  ,
\\[0.3em]
\leq & \displaystyle c|e|+ \rho  |x-\dx|
\ .
\end{array}
$\refstepcounter{equation}\label{eq_dot_x_upperbound}\hfill$(\theequation)$
\\

Now let $\dr$ be a positive real number smaller than $r$
and $S$ be a 
positive real number both to be made precise later on. Let $\de_0$ in $B_e(\dr)$ and $\dx_0$ in $\RR^{n_x}$  be arbitrary
and let $(\dE(\de_0,\dx_0,t),\dX(\dx_0,t))$ be the corresponding
solution of (\ref{eq_System_dif}). Because  of the completeness
assumption on (\ref{eq_System}),
the linearity of (\ref{eq_System_dif}) and the fact that
$(0,\dX(\dx_0,t))$ is solution of both (\ref{eq_System}) and 
(\ref{eq_System_dif}), 
$(\dE,\dX)$ is defined on $[0,+\infty )$.
We denote~:
$$
\de_i=\dE(\de_0,\dx_0,iS)\  ,\quad
\dx_i=\dX(\dx_0,iS)
\qquad \forall i\in
\NN\  \footnote{$\NN$ denotes the set of integers.}
$$
and consider the corresponding solutions
$(E(\de_i,\dx_i,s),X(\de_i,\dx_i,s))$ of (\ref{eq_System}). By
assumption, they are defined on $[0,+\infty )$ and,
because of (\ref{eq_ExpStab}), if $\de_i$ is in $B_e(r)$,
then $E(\de_i,\dx_i,s)$ is in
$B_e(kr)$ for all positive times $s$, making possible the use of
inequalities (\ref{eq_dot_z_upperbound}) and (\ref{eq_dot_x_upperbound}). Finally, for each
integer $i$,
we define the following time functions on $[0,S]$
\begin{eqnarray*}
Z_i(s) &= &|E(\de_i,\dx_i,s) - \dE(\de_0,\dx_0,s+iS)|\ ,
\\
W_i(s) &= &
|X(\de_i,\dx_i,s)-\dX(\dx_0,s+iS)|
\ .
\end{eqnarray*}

Note that we have $Z_i(0)=W_i(0)=0$.

From the inequalities (\ref{eq_dot_x_upperbound}), and
(\ref{eq_ExpStabDrift}), we get,
for each integer $i$
such that $\de_i$ is in $B_e(r)$,
and for all $s$ in $[0,S]$,
\begin{eqnarray*}
W_i(s) &\leq &c\int_0^s\exp( \rho(s-\sigma ))|E(\de_i,\dx_i,s)|d\sigma
\ ,
\\
&\leq &c\int_0^s\exp( \rho(s-\sigma ))k\exp(-\lambda \sigma )d\sigma |\de_i|\ ,
\\
&\leq &ck\exp( -\lambda  s)\frac{\exp( (\rho+\lambda) s)-1}{\rho+\lambda}|\de_i|
\ .
\end{eqnarray*}
Similarly, using (\ref{eq_dot_z_upperbound}) and Gr\"onwall inequality we get
\\[0.7em]$\displaystyle
Z_i(s)$\hfill \null \\$\displaystyle 
\leq \:    c\!\int_0^s\!\!\exp(\mu (s
-\sigma ) ) 
|E(\de_i,\dx_i,\sigma )|
\left(|E(\de_i,\dx_i,\sigma 
)| \!+
W_i(\sigma )\right)
d\sigma
$\\[0.7em]$
\leq \:  \startmodifOLD \gamma(s)\,  |\de_i|^2 \stopmodifOLD
\qquad
\forall s\in [0,S]
\  ,
$\\[0.7em]
where we have used the notation,
\\[0.7em]\vbox{\noindent$\displaystyle
\gamma(s) = \startmodifOLD c\int_0^s\exp(\mu (s
-\sigma ) ) k\exp(-2\lambda \sigma )
\times \stopmodifOLD
$\\[0.3em]\null\hfill$\displaystyle
\times
\left(k + ck\exp( -\lambda  \sigma )\frac{\exp( (\rho+\lambda) \sigma )-1}{\rho+\lambda}\right)d\sigma
\  .
$}\\[0.7em]
With all this, we have obtained that, if we have $\de_j$ in $B_e(r)$  for all $j$ in
$\{0,\ldots,i\}$, then we have also,
for all $s$ in  $[0,S]$ and all $j$ in $\{0,\ldots,i\}$,
\begin{eqnarray*}
| \dE(\de_0,\dx_0,s+jS)| &=& | \dE(\de_j,\dx_j,s)|
\  ,
\\
&\leq &|E(\de_j,\dx_j,s)| + |Z_j(s)|
\  ,
\\
&\leq  &
\left[
\vrule height 0.5em depth 0.5em width 0em
k\exp(-\lambda s) +\gamma(s)|\de_j|\right]|\de_j|
\end{eqnarray*}
Now, given a real number $\varepsilon $ in $(0,1)$, we select $S$ and $\dr$ to satisfy~:
\begin{eqnarray*}
&\displaystyle k\exp(-\lambda S) \leq \frac{\min\{k,1-\varepsilon \}}{2}
\  ,
\\
&\displaystyle
\dr\leq \min\left\{r,\frac{\min\{k,1-\varepsilon \}}{2\sup_{s\in [0,S]}
\gamma (s)}\right\}
\  .
\end{eqnarray*}
Then, for all $\de_j$ smaller in norm than $\dr$, we have
$$
|\dE(\de_0,\dx_0,s+jS)|\leq (1-\epsilon)\,   |\de_j|
$$
So, since
$\de_0$ is in $B_e(\dr)$, it follows by induction that we have~:
$$
|\de_i|=| \dE(\de_0,\dx_0,iS)| \leq (1-\varepsilon )^i \,  \dr\leq \dr
\qquad \forall i \in \NN
\  .
$$
Since,
with (\ref{LP5}),
we have also
$
\dot{\overparen{|\de|}}
\leq \mu |\de|
$,
we have established,
for all $s$ in $[0,S]$ and all $i$ in $\NN$,
$$
|\dE(\de_0,\dx_0,s+iS)|\leq \exp(\mu s)(1-\epsilon)^i |\de_0|
$$
and therefore,
for all $(\de_0,\dx_0,t)$ in
$B_e(a)\times\RR^{n_x}\times \RR_{\geq 0}$,
$$
|\dE(\de_0,\dx_0,t)|\leq \exp(\mu S)(1-\epsilon)^\frac{t-S}{S} |\de_0|
\  .
$$
By rearranging this inequality and taking advantage of the
homogeneity of the system (\ref{eq_System_dif}) in the $\de$ component,
we have obtained (\ref{eq_ExpStabDrift}) with $\tilde
k=\frac{\exp(\mu S)}{1-\epsilon}$ and $\tilde\lambda = \startmodifOLD - \frac{\ln(1-\epsilon)}{S} \stopmodifOLD $.
\end{proof}

\subsection{Proof of Proposition \ref{Prop_ExistTensor}}
\label{Sec_ProofPropTensor}
\begin{proof}
Let $(\dE(\de_0,\dx_0,t),\dX(\dx_0,t))$ be the solution
of (\ref{eq_System_dif}) passing through an arbitrary pair
$(\de_0,\dx_0)$ in  $\RR^{n_e}\times\RR^{n_x}$. By assumption, it is defined on
$[0,+\infty )$.

For any $v$ in $\RR^{n_e}$, we have
$$
\frac{\partial }{\partial t}
\left(\frac{\partial \dE}{\partial \de}(0,\dx_0,t)v \right)
= \frac{\partial F}{\partial e}(0,\dX(\dx_0,t))\frac{\partial \dE}{\partial \de}(0,\dx_0,t) v
\  .
$$
Uniqueness of solutions then implies,
for all $(\de_0,\dx_0,t)$ in
$\RR^{n_e}\times\RR^{n_x}\times\RR_{\geq 0}$,
$
\dE(\de_0,\dx_0,t)=\frac{\partial \dE}{\partial \de}(0,\dx_0,t)\de_0
$
and our assumption (\ref{eq_ExpStabDrift}) gives,
for all $(\de_0,\dx_0,t)$ in
$\RR^{n_e}\times\RR^{n_x}\times\RR_{\geq 0}$,
$$
\left|\frac{\partial \dE}{\partial \de}(0,\dx_0,t)\de_0\right|\leq \dk |\de_0|\exp(-\tilde \lambda t)
$$
and therefore
$$
\left|\frac{\partial \dE}{\partial \de}(0,\dx_0,t)\right|\leq \dk \exp(-\tilde \lambda t)
\qquad \forall (\dx_0,t)\in
\RR^{n_x}\times\RR_{\geq 0}
\ .
$$
This allows us to claim that, for every symmetric positive definite
matrix $Q$, the function $\PR :\RR^{n_x}\rightarrow\RR^{n_e\times n_e}$ given by \startmodifOLD (\ref{eq_P}) \stopmodifOLD
is well defined, continuous and satisfies
$$
\lambda_{\max}\{\PR (\dx)\}\leq \frac{\dk ^2}{2\tilde \lambda }\lambda_{\max}\{Q\}=\overline{p}
\qquad \forall \dx\in \RR^{n_x}
\  .
$$
Moreover we have
\\[0.7em]$\displaystyle
\frac{\partial }{\partial t}
\left(
v^\prime \left[\frac{\partial \dE}{\partial \de}(0,\dx_0,t)\right]^{-1}
\right)
$\hfill \null\\[0.3em]\null\hfill$\displaystyle
=
-v^\prime  \left[\frac{\partial \dE}{\partial \de}(0,\dx_0,t)\right]^{-1}
\frac{\partial F}{\partial e}(0,\dX(\dx_0,t))
\  ,
$\\[1em]
With (\ref{LP5}), this yields
$$
\left|v^\prime \left[\frac{\partial \dE}{\partial 
\de}(0,\dx_0,t)\right]^{-1}\right|
\leq \exp(\mu t)
\left|v\right|
$$
and implies
\\[1em]$\displaystyle 
[v^\prime v]^2
$\hfill \null \\\null \hfill $\displaystyle 
\begin{array}{cl}
\leq &\displaystyle 
\left|v^\prime\left[\frac{\partial \dE}{\partial 
\de}(0,\dx_0,t)\right]^{-1}\right|^2
 \left|\frac{\partial \dE}{\partial \de}(0,\dx_0,t)v \right|^2
\qquad \qquad \quad 
\\[0.7em]
\leq &\displaystyle 
\frac{1}{\lambda_{\min}\{Q\}}
\left|v^\prime\left[\frac{\partial \dE}{\partial \de}(0,\dx_0,t)\right]^{-1}\right|^2
\times\\&\multicolumn{1}{r@{}}{ \displaystyle %
\times\; 
v^\prime\frac{\partial \dE}{\partial \de}(0,\dx_0,t)^\prime Q \frac{\partial \dE}{\partial \de}(0,\dx_0,t)v 
}
\\
\\[0.7em]
\leq &\displaystyle 
\frac{|v|^2 \exp(2\mu t)}{\lambda_{\min}\{Q\}}\: 
v^\prime\frac{\partial \dE}{\partial \de}(0,\dx_0,t)^\prime Q \frac{\partial \dE}{\partial \de}(0,\dx_0,t)v 
\end{array}$\\[1em]
This gives
$$
\underline{p}= \frac{1}{2\mu  }\lambda_{\min}\{Q\}
\leq \lambda_{\min}\{\PR (\dx)\}
\qquad \forall \dx\in \RR^{n_x}
\  .
$$

Finally, to get (\ref{eq_TensorDerivative}), let us exploit the semi
group property of the solutions.
We have for all $(\de,\dx)$ in $\RR^{n_x}\times\RR^{n_e}$ and all
$(t,r)$ in $\RR_{\geq 0}^2$
$$
\dE(\dE(\de,\dx,t),\dX(\dx,t),r)=
\dE(\de,\dx, t+r)\ .
$$
Differentiating with respect to $\de$ the previous equality yields
$$
\frac{\partial \dE}{\partial \de}(\dE(\de,\dx,t),\dX(\dx,t),r)
\frac{\partial \dE}{\partial \de}(\de,\dx,t)
=
\frac{\partial \dE}{\partial \de}(\de,\dx, t+r)
$$
Setting in the previous equality
$$
(\de,\dx):= (0,\dX(\dx,h))\ ,\ h:=-t\ ,\ s:=t+r
$$
we get for all $\dx$ in $\RR^{n_x}$ and all $(s,h)$ in $\RR^2$
$$
\frac{\partial \dE}{\partial \de}(0,\dx,s+h)
\frac{\partial \dE}{\partial \de}(0,\dX(\dx,h),-h)
=
\frac{\partial \dE}{\partial \de}(0,\dX(\dx,h), s)\ .
$$

Consequently, this yields,
\\[1em]\vbox{\noindent$
\PR (\dX(\dx,h))
$\hfill \null\\[0.3em]\null\hfill$
\begin{array}{@{}l@{}}
\displaystyle
= \lim_{T\to +\infty }\int_0^{T} \left(
\frac{\partial \dE}{\partial e}(0,\dX(\dx,h),s)\right)^\prime Q\frac{\partial \dE}{\partial \de}(0,\dX(\dx,h),s)
ds
\\
\displaystyle
= \lim_{T\to +\infty }
\left(
\frac{\partial \dE}{\partial \de}(0,\dX(\dx,h),-h)
\right)^\prime  \times
\\\multicolumn{1}{@{}c@{}}{%
\displaystyle
\times
\left[\int_0^{T} \left(
\frac{\partial \dE}{\partial \de}(0,\dx,s+h)
\right)^\prime Q\frac{\partial \dE}{\partial \de}(0,\dx,s+h)
ds
\right]
\times
}
\\\multicolumn{1}{@{}r@{}}{%
\displaystyle
\times
\frac{\partial \dE}{\partial \de}(0,\dX(\dx,h),-h)
}
\end{array}
$}\\[1em]
But we have~:
\begin{eqnarray*}
&\displaystyle
\lim_{h\to 0}\frac{\frac{\partial \dE}{\partial \de}(0,\dX(\dx,h),-h)-I}{h}
= -\frac{\partial F}{\partial e}(0,\dx)
\  ,
\\[0.5em]
&\displaystyle
\lim_{h\to 0}\frac{
\frac{\partial \dE}{\partial \de}(0,\dx,s+h)
-
\frac{\partial \dE}{\partial \de}(0,\dx,s)
}{h}
=
\frac{\partial }{\partial s}\left(\frac{\partial \dE}{\partial \de}(0,\dx,s)\right)
\end{eqnarray*}
and
\\[0.7em]$\displaystyle
\int_0^T
\frac{\partial }{\partial s}\left(\frac{\partial \dE}{\partial
\de}(0,\dx,s)\right)^\prime
Q
\left(\frac{\partial \dE}{\partial \de}(0,\dx,s)\right)
ds
$\hfill \null\\[0.3em]\null\hfill$\displaystyle
+
\int_0^T
\left(\frac{\partial \dE}{\partial
\de}(0,\dx,s)\right)^\prime
Q
\frac{\partial }{\partial s}\left(\frac{\partial \dE}{\partial \de}(0,\dx,s)\right)
ds
$\hfill \null\\[0.3em]\null\hfill$\displaystyle
=
\left(\frac{\partial \dE}{\partial \de}(0,\dx,T)\right)^\prime
Q
\left(\frac{\partial \dE}{\partial \de}(0,\dx,T)\right)
-
Q
\  .$\\[0.7em]
Since $\lim_T$ and $\lim_h$ commute because of the exponential
convergence to $0$ of $\frac{\partial \dE}{\partial
\de}(0,\dx,s)$, we conclude that the derivative (\ref{LP9}) does
exist and satisfies (\ref{eq_TensorDerivative}).
\end{proof}
\subsection{Proof of Proposition \ref{Prop_Lyap}}
\label{Sec_ProofPropLyap}
\begin{proof}
Consider the function $V(e,x) = e^\prime \PR (x)e$.
Using (\ref{eq_TensorDerivative}), the time derivative of $V$
along the solutions of the system (\ref{eq_System}) is given, for all
$(e,x)$, by
\\[0.7em]$\displaystyle
\dot {\overparen{V(e,x)}} 
\; = \;
2e^\prime P(x)F(e,x) + \frac{\partial e^\prime \PR (\cdot)e}{\partial x} (x)G(e,x)$\\[0.5em]$\displaystyle \null\hphantom{\dot {\overparen{V(e,x)}} }\;\leq
- e^\prime Qe +  2e^\prime \PR (x)  \left[F(e,x)-\frac{\partial F}{\partial e}(0,x)e\right]
$\hfill \null\\[0.7em]\null\hfill$\displaystyle
+  \frac{\partial e^\prime \PR (\cdot)e}{\partial x} (x)\left[G(e,x)-G(0,x)\right]
\; .
$\\[1em]
On the other hand, using Hadamard's Lemma and (\ref{LP6}), we get~:
\begin{eqnarray*}
&\displaystyle \left|F(e,x)-\frac{\partial F}{\partial e}(0,x)e\right|
\leq c |e|^2
\  ,
\\
&\displaystyle
\left|G(e,x)-G(0,x)\right| \leq  c |e|
\qquad \forall (e,x)\in B_e(\eta ) \times \RR^{n_x}
\  .
\end{eqnarray*}
These inequalities together with (\ref{LP7}) and (\ref{LP8}) imply,
for all $(e,x)$ in $B_e(\eta ) \times \RR^{n_x}$,
$$
\dot {\overparen{V(e,x)}} \leq
- \left[
\frac{\lambda _{\min}\{Q\}}{\overline{p}}
-2c(1+c)\frac{\overline{p}}{\underline{p}} |e|\right]V(e,x)
\  .
$$
 It shows immediately  that (\ref{eq_ExpStab}) holds with $r$, $k$ and $\lambda $ satisfying~:
\begin{eqnarray*}
r  &<&\frac{\null\:   \underline{p}\: \null   }{\overline{p}}
\min\left\{\eta
,\frac{\lambda _{\min}\{Q\}}{2\overline{p}c(1+c)}
\right\}
\; ,
\\
k&=& \startmodifOLD \sqrt{\frac{\null \,  \overline{p}\,  \null }{\underline{p}}}\stopmodifOLD
\; ,
\\
\lambda &=&\startmodifOLD
\left[
\frac{\lambda _{\min}\{Q\}}{2\overline{p}}
-r c(1+c)\frac{\,  \overline{p}\,  }{\underline{p}} \right]\stopmodifOLD
\;  .
\end{eqnarray*}
\end{proof}

\subsection{Proof of Proposition \ref{Prop_GlobSynch}}
\label{Sec_ProofPropGlobSynch}

The result holds when $w$ is in $\DR$ or when $U$ is constant 
(since (\ref{ArtsteinGlob}) holds for all $v$). So,
in view of \cite[Proposition A.2.1]{SanfelicePraly_TAC_12}, we can 
assume without loss of generality that $\frac{\partial U}{\partial 
w}$ has nowhere a zero norm and, in the following, we 
restrict our attention to $\RR^{2n}\!\setminus\!\DR$.
In $\RR^{2n}\!\setminus\!\mathcal{D}$ the dynamics of $w$ is
$$
\dot w_i = f(w_i) \;+\;\kell(w)g(w_i)\alpha (w_i)\,  \sum_{j=1}^2\left [  U(w_j) -U(w_i)\right ]
$$
With the $C^2$ matrix function $P$  we define the Riemannian
length of
a piece-wise $C^1$ path $\gamma :[s_1,s_2]\to
\RR^n$, between $w_1=\gamma (s_1)$ and $w_2=\gamma (s_2)$  as in (\ref{eq_RiemanianLength}) and the corresponding 
distance $d(w_1,w_2)$ by minimizing along all such path

Because of (\ref{LP10}) and the fact that $P$ is  \startmodifOLD $C^3$, \stopmodifOLD 
Hopf-Rinow Theorem implies
the metric space we obtain
this way
is complete, and,
given any  $w_1$ in $\RR^{n}$ and $w_2$ in $\RR^n$, there exists a  \startmodifOLD $C^3$ \stopmodifOLD 
normalized\footnote{%
This means that $\gamma ^*$ satisfies
\null \quad $\displaystyle
\frac{d\gamma^*}{ds}(s)^\prime  P (\gamma^*(s))\frac{d\gamma^*}{ds}(s)=1\ .
$\hfill \null
}
 minimal geodesic
$\gamma^*$,
solution of (\ref{LP35}),
 such that
\begin{eqnarray}
\nonumber&\displaystyle
w_1=\gamma ^*(s_1)
\  ,\quad
w_2=\gamma ^*(s_2)
\  ,
\\[0.3em]
&\displaystyle\label{LyapSynch}
d(w_1,w_2) = \left. L(\gamma^*)\vrule height 0.51em depth 0.51em
width 0em \right|_{s_1}^{s_2}=s_2-s_1
\ .
\end{eqnarray}

Following  \cite{SanfelicePraly_TAC_12},  for  each $s$ in $[s_1,s_2]$ consider the $C^1$ function
$t\mapsto \Gamma(s,t)$ solution of
\\[0.7em]$\displaystyle
\displaystyle
\frac{\partial \Gamma}{\partial t}(s,t) =
f(\Gamma(s,t))
$\hfill \null \\[0.3em]$\displaystyle
\null \quad +
\kell(W(w,t))\,  
g(\Gamma(s,t))\alpha (\Gamma(s,t))
\times
$\hfill \null \\[0.3em]\null \hfill $\displaystyle
\times
\sum_{j=1}^2 \left [U(W_j(w,t))-U(\Gamma(s,t) )\right ]
$\\[0.7em]
with initial condition
\begin{equation}
\label{LP25}
\Gamma(s,0) = \gamma^*(s)
\; .
\end{equation}
With (\ref{LP22}), we have
$$
\Gamma (s_1,t)=W_1(w,t)
\  ,\quad
\Gamma (s_2,t)=W_2(w,t)
$$
and so, for each $t$, $s\in [s_1,s_2]\mapsto \Gamma (s,t)$ is
a $C^2$ path between $W_1(w,t)$ and $W_2(w,t)$.
From the first variation formula
(see \cite[Theorem 6.14]{Spivak_Book_79} for instance\footnote{In \cite[Theorem 6.14]{Spivak_Book_79}, the result is stated with $\gamma^*$ $C^\infty$ note however that $C^2$ is enough.}), 
we have
$$
\ddt \left.\left(\left.L(\Gamma(s,t))
\vrule height 0.51em depth 0.51em width 0em
\right|_{s_1}^{s_2}\right)\vrule height 0.51em depth 0.51em width 0em
\right|_{t=0}\;=\;
\startmodifOLD
\bida (w) \;+\; \bidc(w)
\stopmodifOLD
$$
where
\\[1em]$\displaystyle 
\bida (w)\;=\; \kell(w)\left [U(w_1) -U(w_2)\right ]\frac{d\gamma ^*}{ds}(s_2)^\prime
\startmodifOLD
P(w_2)g(w_2)\alpha (w_2)
\stopmodifOLD
$\\\hfill \null \\\null \hfill$\displaystyle 
 -\kell(w) \left [U(w_2) -U(w_1)\right ]\frac{d\gamma ^*}{ds}(s_1)^\prime
P(w_1)g(w_1)\alpha (w_1)
$\\[1em]$\displaystyle 
\bidc (w)\;=\; \frac{d\gamma ^*}{ds}(s_2)^\prime
P(w_2)f(w_2)
\;-\; 
\frac{d\gamma ^*}{ds}(s_1)^\prime
P(w_1)f(w_1)
$\\[1em]
But, with (\ref{LP24}), we obtain~:
\\[1em]$\displaystyle 
\startmodifOLD
\bida 
\stopmodifOLD
(w)\;=\; 
 -\kell(w)\left[U(w_2)-U(w_1)\right]\times
$\\\hfill \null \\\null \hfill $\displaystyle 
\times
\left[
\frac{\partial U}{\partial x}(w_2)\frac{d\gamma ^*}{ds}(s_2)
+ 
\frac{\partial U}{\partial x}(w_1)\frac{d\gamma ^*}{ds}(s_1)
\right]
$\\[1em]
Also, with the Euler-Lagrange form of the geodesics equation
(\ref{LP35}),
we get~:
\\[1em]\vbox{\noindent$\displaystyle 
\startmodifOLD
\bidc
\stopmodifOLD
 (w)
$\\\hfill \null \\\null \hfill $\displaystyle 
\begin{array}{@{}c@{\; }l@{}}
=&\displaystyle \int_{s_1}^{s_2}
\left[\frac{d}{ds}\left(\frac{d\gamma ^*}{ds}(s)^\prime
P(\gamma ^*(s))\right)f(\gamma ^*(s))
\right.
\\
\multicolumn{2}{@{}r@{}}{%
\left.
\displaystyle \;+\; 
\frac{d\gamma ^*}{ds}(s)^\prime
P(\gamma ^*(s))\frac{d}{ds}\left(
\vrule height 0.5em depth 0.5em width 0pt
f(\gamma ^*(s))\right)\right]ds
}
\  ,
\\[0.5em]
=&\displaystyle 
\int_{s_1}^{s_2}
\left[
\frac{1}{2}\frac{\partial }{\partial x}\left.\left(
\frac{d\gamma ^*}{ds}(s)^\prime
P(x)
\frac{d\gamma ^*}{ds}(s)\right)\right|_{x=\gamma^*(s)}
\!\!f(\gamma ^*(s))
\right.
\qquad 
\\
\multicolumn{2}{@{}r@{}}{%
\left.
\displaystyle \;+\; 
\frac{d\gamma ^*}{ds}(s)^\prime
P(\gamma ^*(s))\frac{\partial f}{\partial x}(\gamma ^*(s))\frac{d\gamma ^*}{ds}(s)\right]ds
}
\  .
\end{array}
$}\\[1em]
Here the integrand is nothing but the left hand side of 
(\ref{ArtsteinGlob}). With a compactness argument\footnote{%
The following two properties are equivalent\\
a) $ v^\prime f(x) v < 0$ for all $v$ with $|v|=1$ and all $x$ 
satisfying $g(x)v=0$ \\
b) there exists $\nu $ such that
$v^\prime f(x) v - \nu (x) |g(x)v|^2 \leq 0$ for all $v$ with $|v|=1$ 
and all $x$.
\\
\textit{Proof} b) $\Rightarrow$ a) is trivial. For the converse, let 
$C$ be an arbitrary compact set, if b) does not hold for
some $\eta _C$ and all
$x$ in $C$, 
there exist $x_i$ and $v_i$ with $|v_i|=1$ satisfying
$v_i^\prime f(x_i) v _i \geq i |g(x_i)v_i|^2 $. With compactness 
this implies the existence of $x_\omega $ and $v_\omega$ with $|v_\omega|=1$ satisfying
$g(x_\omega)v_\omega=0$ and $v_\omega ^\prime f(x_\omega ) v _\omega 
\geq 0$. This contradicts a).
}
we can show that 
condition (\ref{ArtsteinGlob}) in Proposition \ref{Prop_GlobSynch} is equivalent to the existence
of a smooth function $\nu  :\RR^n\to \RR_+$ such that, for all 
$(x,v)$,
\\[1em]$\displaystyle 
\frac{1}{2}\,  v^\prime \der_f\PR (x)v +
v^\prime \PR (x)\frac{\partial f}{\partial x}(x)v
$\hfill \null \\\null \hfill $\displaystyle 
\leq  \; -  \lambda \,   v^\prime P(x)v
\;+\; 
\nu  (x)\left|\frac{\partial U}{\partial x}(x)v\right|^2
\  .
$\\[1em]
Hence, the geodesic being normalized,  we have~:
\\[1em]$\displaystyle 
\bidc (w) \;+\; \lambda \,  \int_{s_1}^{s_2}
\frac{d\gamma ^*}{ds}(s)^\prime P(\gamma ^*(s)
\frac{d\gamma ^*}{ds}(s) ds
$\hfill \null \\[0.5em]\null \hfill $ \displaystyle 
=\; \bidc (w) \;+\; \lambda \,  d(w_1,w_2)
\; \leq\; \bidb (w) 
\  ,
$\\[0.7em]
with the notation~:
$$
\bidb (w) \;=\; \int_{s_1}^{s_2}
\nu  (\gamma^*(s))
\left|\frac{\partial U}{\partial x}(\gamma ^*(s))\frac{d\gamma ^*}{ds}(s)\right|^2 ds
\ . 
$$
From $\bida$ and $\bidb$ we define two   $C^2$   functions $a$ and $b$ 
by dividing by $d(w_1,w_2)=s_2-s_1$. Namely, we define~:
\\[1em]$\displaystyle 
a _{\gamma ^*}(w,r)\;=\; \frac{U(\gamma ^*(r+s_1))-U(w_1)}{r}
\; \times
$\refstepcounter{equation}\label{LP31}\hfill$(\theequation)$
\\[0.7em]\null \hfill $\displaystyle 
\times\; 
\left[
\frac{\partial U}{\partial x}(\gamma ^*(r+s_1))
\frac{d\gamma ^*}{ds}(\gamma ^*(r+s_1))
+ 
\frac{\partial U}{\partial x}(w_1)\frac{d\gamma ^*}{ds}(s_1)
\right]
$\\[1em]
\begin{equation}
\label{LP33}
b_{\gamma ^*}(w,r) = \frac{1}{r}
\int_{s_1}^{r+s_1}
\hskip -1em\nu  (\gamma^*(s))
\left|
\frac{\partial U}{\partial x}(\gamma ^*(s))\frac{d\gamma ^*}{ds}(s)
\right|^2 \! ds
\,  .
\end{equation}
They are defined on
$\RR^{2n}\!\setminus\!\mathcal{D} \: \times \; ]0,d(w_1,w_2)]$
and depend a priori on 
the particular minimizing geodesic $\gamma ^*$ we consider. We extend 
by continuity (in  $r$  ) their definition to
$\RR^{2n}\!\setminus\!\mathcal{D} \: \times \; [0,d(w_1,w_2)]$
by letting
\begin{align*}
a _{\gamma ^*}(w,0)&=
2
\left|\frac{\partial U}{\partial x}(\gamma ^*(s_1))\frac{d\gamma 
^*}{ds}(s_1)\right|^2\ ,
\\
b_{\gamma ^*}(w,0) &= 
\nu (\gamma^*(s_1))
\left|
\frac{\partial U}{\partial x}(\gamma ^*(s_1))\frac{d\gamma ^*}{ds}(s_1)
\right|^2 
\,  .
\end{align*}
In this way, for any pair $(w_1,w_2)$ in 
\mbox{$\RR^{2n}\!\setminus\!\mathcal{D} $}
and any minimizing geodesic $\gamma ^*$ between $w_1$ and $w_2$,
\begin{list}{}{%
\parskip 0pt plus 0pt minus 0pt%
\topsep 0.5ex plus 0pt minus 0pt%
\parsep 0pt plus 0pt minus 0pt%
\partopsep 0pt plus 0pt minus 0pt%
\itemsep 0.5ex plus 0pt minus 0pt
\settowidth{\labelwidth}{--}%
\setlength{\labelsep}{0.5em}%
\setlength{\leftmargin}{\labelwidth}%
\addtolength{\leftmargin}{\labelsep}%
}
\item[--]
the function
$r\mapsto (a _{\gamma ^*}(w,r),b _{\gamma ^*}(w,r))$ is 
defined and $C^1$
\footnote{
This comes from this general result.
Let $f$ be a $C^2$ function defined on a neighborhood of $0$ in 
$\RR$, where it is $0$.
The function $\varphi $ defined as $\varphi (r)=\frac{f(r)}{r}$ 
if $r\neq 0$ and $\varphi (0)=f^\prime(0)$ is $C^1$.
\\
Indeed it is clearly $C^2$ everywhere except may be at $0$. Its first 
derivative is $\varphi ^\prime(r)=\frac{f(r)-rf^\prime(0)}{r^2}$. 
It is also continuous at $0$ since $\lim_{r\to 0}\varphi 
(r)=f^\prime(0)=\varphi (0)$. Its first derivative at $0$ exists if
$\lim_{r \to 0}\frac{\varphi (r)-\varphi (0)}{r}=\lim_{r\to 
0}\frac{f(r)-rf^\prime(0)}{r^2}$ exists. But this is the case, since 
$f$ being $C^2$, we have
\begin{eqnarray*}
\frac{f(r)-rf^\prime(0)}{r^2}&=&\frac{1}{r^2}\int_0^r 
[f^\prime(s)-f^\prime(0)]ds
\\
&=&\frac{1}{r^2}\int_0^r\int_0^s 
f^{\prime\prime}(t) dt ds
\\
&=&
\frac{1}{r^2}\int_0^r f^{\prime\prime}(t) [r-t]dt
\end{eqnarray*}
which leads to
$\varphi ^\prime(0)=\frac{1}{2}f^{\prime\prime}(0)$. We have also 
$$
\frac{f(r)-rf^\prime(r)}{r^2}=-\frac{1}{r^2}\int_0^r 
s f^{\prime\prime}(s) ds
$$
This implies
$$
\lim_{r\to 0} \varphi ^\prime(r) = \varphi^\prime(0)
$$
and therefore $\varphi ^\prime $ is continuous.
}
on $[0,d(w_1,w_2)]$,
\item[--]we have~:
\\[1em]$\displaystyle 
\frac{1}{d(w_1,w_2)}\,  \ddt \left.\left(\left.L(\Gamma(s,t))
\vrule height 0.51em depth 0.51em width 0em
\right|_{s_1}^{s_2}\right)\vrule height 0.51em depth 0.51em width 0em
\right|_{t=0}
\;+\; \lambda 
$\hfill \null \\[1em]\null \hfill $\displaystyle 
\leq \, 
b_{\gamma ^*}(w,d(w_1,w_2))
-
\kell (w)\,  a_{\gamma ^*}(w,d(w_1,w_2))
\  .
$\\
\end{list}
Also

\begin{lemma}
\label{lem1}
For any pair $(w_1,w_2)$ in \mbox{$\RR^{2n}\!\setminus\!\mathcal{D} $}
and any minimizing geodesic $\gamma ^*$ between $w_1$ and $w_2$,
$a_{\gamma ^*}(w,d(w_1,w_2))$ is non negative,
and if it is 
zero, the same holds for $b_{\gamma ^*}(w,d(w_1,w_2))$.
\end{lemma}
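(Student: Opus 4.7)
The plan is to introduce the scalar function
\[
\psi(s) \;=\; \frac{d}{ds} U(\gamma^*(s)) \;=\; \frac{\partial U}{\partial x}(\gamma^*(s)) \frac{d\gamma^*}{ds}(s),
\]
and then rewrite the quantities of interest in terms of $\psi$. Using the fundamental theorem of calculus for the factor $U(w_2)-U(w_1)$ in (\ref{LP31}) evaluated at $r = d(w_1,w_2) = s_2-s_1$, and the definition (\ref{LP33}) evaluated likewise, I obtain
\[
a_{\gamma^*}(w,d(w_1,w_2)) \;=\; \frac{1}{s_2-s_1} \Bigl(\int_{s_1}^{s_2}\psi(s)\,ds\Bigr) \bigl[\psi(s_1)+\psi(s_2)\bigr],
\]
\[
b_{\gamma^*}(w,d(w_1,w_2)) \;=\; \frac{1}{s_2-s_1}\int_{s_1}^{s_2}\nu(\gamma^*(s))|\psi(s)|^2\,ds.
\]

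The crucial step is to establish a dichotomy on $\psi$. Suppose $\psi(s_0)=0$ for some $s_0 \in [s_1,s_2]$. Then with $x_0 = \gamma^*(s_0)$ and $v_0 = \frac{d\gamma^*}{ds}(s_0)$ we have $\frac{\partial U}{\partial x}(x_0)v_0 = 0$, and, thanks to the normalization of the minimizing geodesic, $v_0^\prime P(x_0)v_0 = 1$. The totally geodesic hypothesis (item 3 of Proposition \ref{Prop_GlobSynch}), applied to the level set $S = \{x : U(x) = U(x_0)\}$, then asserts that any solution of (\ref{LP35}) issuing from $(x_0,v_0)$ keeps its tangent vector orthogonal to $\frac{\partial U}{\partial x}$ for every value of the arc-length parameter. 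Uniqueness of solutions to the second order ODE (\ref{LP35}) (available since $P$ is $C^3$) identifies this solution with $s \mapsto \gamma^*(s_0+s)$. Hence $\psi$ vanishes identically on the translated interval, and therefore on $[s_1,s_2]$.

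By continuity we then have the clean dichotomy: either $\psi \equiv 0$ on $[s_1,s_2]$, or $\psi$ vanishes nowhere on that interval and keeps a constant sign. In the first case both $a_{\gamma^*}(w,d(w_1,w_2))$ and $b_{\gamma^*}(w,d(w_1,w_2))$ are zero, which proves at once the second assertion of the lemma. In the second case $\int_{s_1}^{s_2}\psi(s)\,ds$ and $\psi(s_1)+\psi(s_2)$ have the same sign as $\psi$, so their product is strictly positive, yielding $a_{\gamma^*}(w,d(w_1,w_2))>0$. Combining the two cases gives the non-negativity claim.

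The main obstacle I foresee is not algebraic but in correctly invoking item 3 at an arbitrary interior point $s_0$: one has to check that the tangent vector at $s_0$ meets both the orthogonality and the unit-norm prerequisites of the totally geodesic definition, and then use uniqueness of the geodesic flow to propagate the confinement to $S$ from a translated solution back to $\gamma^*$ itself on the full interval $[s_1,s_2]$. Once this is in place, the sign analysis for $a_{\gamma^*}$ and $b_{\gamma^*}$ is immediate.
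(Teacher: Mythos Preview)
Your proof is correct and takes a cleaner route than the paper's. The paper argues by case analysis: if $a_{\gamma^*}(w,0)=2|\psi(s_1)|^2=0$ it applies the totally geodesic property at the endpoint $s_1$ to force $\psi\equiv 0$; if instead $a_{\gamma^*}(w,0)>0$ while $a_{\gamma^*}(w,d(w_1,w_2))\leq 0$, it runs the intermediate value theorem on the two factors of $r\mapsto a_{\gamma^*}(w,r)$ to locate an $r_0$ where one factor vanishes, and in the sub-case $U(\gamma^*(r_0+s_1))=U(w_1)$ it invokes an external result (a proposition from \cite{SanfelicePraly_TAC_12}) asserting that a \emph{minimizing} geodesic joining two points of a totally geodesic level set must lie entirely in that set. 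Your argument short-circuits all of this by applying the totally geodesic hypothesis together with uniqueness of solutions to (\ref{LP35}) directly at \emph{any} interior zero of $\psi$, which yields the sharp dichotomy ``$\psi\equiv 0$ on $[s_1,s_2]$ or $\psi$ never vanishes there''. This buys you a self-contained proof that needs neither the minimizing property of $\gamma^*$ (only that it is a normalized geodesic) nor the external proposition, and the sign conclusion for $a_{\gamma^*}$ becomes a one-line observation.
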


\begin{proof}
For any pair $w$ in $\RR^{2n}\!\setminus\!\mathcal{D} $,
the function $r\to a_{\gamma ^*}(w,r)$ is 
defined and continuous on $[0,d(w_1,w_2)]$.

If the real number
$$
a _{\gamma ^*}(w,0)\;=\;
2
\left|\frac{\partial U}{\partial x}(\gamma ^*(s_1))\frac{d\gamma 
^*}{ds}(s_1)\right|^2
$$
is zero, the level sets of $U$ being totally geodesic, we have
$$
\begin{array}{@{}c@{}}
\displaystyle 
\frac{\partial U}{\partial x}(\gamma ^*(r+s_1))
\frac{d\gamma ^*}{ds}(r+s_1) =  0
\  ,
\\[0.7em]
U(\gamma ^*(r+s_1))= U(w_1)
\  ,
\end{array}
\qquad \forall r \in [0,d(w_1,w_2)]
\  .
$$
and so $U(w_2)= U(w_1)$ and
$a_{\gamma ^*}(w,d(w_1,w_2))$ and $b_{\gamma 
^*}(w,d(w_1,w_2))$ are zero.

If instead that real number
is positive and 
\\[1em]$\displaystyle 
a _{\gamma ^*}(w,d(w_1,w_2))\;=\; \frac{U(\gamma 
^*(s_2))-U(w_1)}{d(w_1,w_2)}
\times
$\\\hfill \null \\\null \hfill $\displaystyle 
\times
\left[
\frac{\partial U}{\partial x}(\gamma ^*(s_2))
\frac{d\gamma ^*}{ds}(\gamma ^*(s_2))
+ 
\frac{\partial U}{\partial x}(w_1)\frac{d\gamma ^*}{ds}(s_1)
\right]
$\\[1em]
is non positive, then there exists $r_0$ in 
in $]0,d(w_1,w_2)]$ such that
\begin{list}{}{%
\parskip 0pt plus 0pt minus 0pt%
\topsep 0pt plus 0pt minus 0pt
\parsep 0pt plus 0pt minus 0pt%
\partopsep 0pt plus 0pt minus 0pt%
\itemsep 0pt plus 0pt minus 0pt
\settowidth{\labelwidth}{\\}%
\setlength{\labelsep}{0.5em}%
\setlength{\leftmargin}{\labelwidth}%
\addtolength{\leftmargin}{\labelsep}%
}
\item[\textit{either}]
$
U(\gamma ^*(r_0+s_1))= U(w_1)
$. But the level sets of $U$ being totally geodesic and $\gamma ^*$ 
being a minimizing geodesic between $w_1$ and $w_2$ and therefore
between
$w_1$ and $\gamma ^*(r_0+s_1)$, 
it follows from (the 
proof of)
\cite[Proposition A.3.2]{SanfelicePraly_TAC_12} that $\gamma ^*
$ takes its values in the level set $\{x\,  :\: U(x)=U(w_{1})\}$ at 
least on $[s_1,r_0+s_1]$. This implies
$$
\frac{\partial U}{\partial x}(\gamma ^*(s))\,  \frac{d\gamma 
^*}{ds}(s)\;=\; 0
$$
for all $s$ in $[s_1,r_0+s_1]$
and consequently in $[s_1,s_2]$. This yields $U(w_2)= U(w_1)$ and
$a_{\gamma ^*}(w,d(w_1,w_2))$ and $b_{\gamma 
^*}(w,d(w_1,w_2))$ are zero;
\item[\textit{or}] we have
\\[1em]$\displaystyle 
\frac{\partial U}{\partial x}(\gamma ^*(r_0+s_1))
\frac{d\gamma ^*}{ds}(\gamma ^*(r_0+s_1))
+ 
\frac{\partial U}{\partial x}(\gamma ^*(s_1))
\frac{d\gamma ^*}{ds}(s_1)
$\hfill \null \\[0.5em]\null \hfill $\displaystyle =\; 0
\  .
$\\[1em]
This implies that $\frac{\partial U}{\partial x}(\gamma ^*(s_1))
\frac{d\gamma ^*}{ds}(s_1)$ and
$\frac{\partial U}{\partial x}(\gamma ^*(r_0+s_1))
\frac{d\gamma ^*}{ds}(r_0+s_1)$ have opposite signs and so the function
$r\mapsto \frac{\partial U}{\partial x}(\gamma ^*(r+s_1))
\frac{d\gamma ^*}{ds}(r+s_1)$ must vanish on $]0,r_0[$. Again this 
implies $U(w_2)= U(w_1)$ and and $a_{\gamma ^*}(w,d(w_1,w_2))$ and $b_{\gamma ^*}(w,d(w_1,w_2))$ are zero.
\end{list}
\end{proof}

\par\vspace{1em}
Now, to each pair of integers $(i_1,i_2)$, we associate the compact set
\\[1em]$\displaystyle 
C_{i _1i_2}= \left\{
\vrule height 0.5em depth 0.5em width 0pt
(w_1,w_2)\in\RR^{2n}\,  :\:
\right.$\hfill \null \\\null \hfill $\displaystyle  \left.
\vrule height 0.5em depth 0.5em width 0pt
i_1\leq d(w_1,0)\leq i_1+1\; ,\  
i_2\leq d(w_2,0)\leq i_2+1
\right\}\ .
$
\par\vspace{1em}\noindent

\begin{lemma}
For any pair $(i_1,i_2)$, there exists a real number $\kell _{i_1i_2}$ 
such that, for all $(w_1,w_2)$ in 
\mbox{$C_{i_1i_2}\setminus\mathcal{D}$} and all $\kell $ larger or equal 
to $\kell_{i_1i_2}$, we have~:
\begin{equation}
\label{LP34}
b(w,d(w_1,w_2))-\kell \,  a(w,d(w_1,w_2)) \leq \frac{\lambda}{2}
\  .
\end{equation}
\end{lemma}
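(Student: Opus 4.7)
The plan is to bound the ratio $b_{\gamma^*}(w,d(w_1,w_2))/a_{\gamma^*}(w,d(w_1,w_2))$ uniformly on $C_{i_1 i_2}\setminus\mathcal{D}$ over all minimizing geodesics $\gamma^*$, and then to take $\kell_{i_1 i_2}$ strictly larger than this uniform bound. When $a_{\gamma^*}=0$, Lemma \ref{lem1} forces $b_{\gamma^*}=0$, so (\ref{LP34}) reduces to $0\leq \lambda/2$, which is automatic; when $a_{\gamma^*}>0$, having $b_{\gamma^*}/a_{\gamma^*}\leq M$ together with $\kell\geq M+\lambda/2$ gives $b_{\gamma^*}-\kell\,a_{\gamma^*}\leq \lambda/2$. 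As a preliminary, I will observe that any minimizing geodesic between points of $C_{i_1 i_2}$ has length at most $i_1+i_2+2$ by the triangle inequality, so, together with (\ref{LP10}), its trace lies in a fixed compact set $K_{i_1 i_2}$ on which the continuous function $\nu$ is bounded by some $\nu_M$. Writing $\psi(s):=U(\gamma^*(s))$, I then have $\psi'(s) = \frac{\partial U}{\partial x}(\gamma^*(s))\frac{d\gamma^*}{ds}(s)$, which is uniformly bounded by $\nu_U/\sqrt{\underline{p}}$, with $\nu_U:=\sup_{K_{i_1 i_2}}|\frac{\partial U}{\partial x}|$.

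For the regime where $d(w_1,w_2)$ is small I will use the continuous extensions $a_{\gamma^*}(w,0)=2\psi'(s_1)^2$ and $b_{\gamma^*}(w,0)=\nu(w_1)\psi'(s_1)^2$: when $\psi'(s_1)\neq 0$ the ratio at $r=0$ equals $\nu(w_1)/2\leq \nu_M/2$, and by joint continuity in $r$ and in the triple $(w_1,w_2,\gamma^*)$ taken in a compact parameter space, there exists $\delta_1>0$ such that $b_{\gamma^*}/a_{\gamma^*}\leq \nu_M/2+1$ whenever $0<d(w_1,w_2)\leq \delta_1$ and $\psi'(s_1)\neq 0$. The remaining sub-case $\psi'(s_1)=0$ is already covered because Lemma \ref{lem1} then forces $a_{\gamma^*}=b_{\gamma^*}=0$ at $r=d(w_1,w_2)$.

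The regime $d(w_1,w_2)\geq \delta_1$ I will handle by contradiction. If no uniform bound exists, there are sequences $(w_1^k,w_2^k)\in C_{i_1 i_2}$ and minimizing geodesics $\gamma^*_k$ with $a_k>0$ and $(b_k-\lambda/2)/a_k\to +\infty$, so in particular $b_k>\lambda/2$ for all $k$. Since $b_k$ is uniformly bounded by $\nu_M(\nu_U/\sqrt{\underline{p}})^2$, this forces $a_k\to 0$. By Arzel\`a--Ascoli applied to the second-order geodesic system (\ref{LP35}), whose coefficients are bounded on $K_{i_1 i_2}$, a subsequence of $(\gamma^*_k,d\gamma^*_k/ds)$ converges uniformly on a common interval to $(\bar\gamma^*, d\bar\gamma^*/ds)$, a geodesic connecting limit points $\bar w_1,\bar w_2\in C_{i_1 i_2}$ with $d(\bar w_1,\bar w_2)\geq \delta_1$ (by continuity of $d$) and length equal to this distance (by continuity of length under $C^1$ convergence), hence minimizing. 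Dominated convergence in (\ref{LP31})--(\ref{LP33}) then gives $a_k\to a_{\bar\gamma^*}=0$, so Lemma \ref{lem1} forces $b_{\bar\gamma^*}=0$ and therefore $b_k\to 0$, contradicting $b_k>\lambda/2$.

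Combining both regimes gives a uniform bound $M_{i_1 i_2}$, and I will take $\kell_{i_1 i_2}:=M_{i_1 i_2}+\lambda/2$. The main obstacle will be the compactness-and-continuity step in the third paragraph: extracting a $C^1$-convergent subsequence of minimizing geodesics whose limit is itself a minimizing geodesic between the limit endpoints, which requires Arzel\`a--Ascoli for the geodesic ODE (\ref{LP35}) together with joint continuity of Riemannian length and of $d$ to identify the limit as minimizing, and then dominated convergence to pass the integrals (\ref{LP31})--(\ref{LP33}) to the limit.
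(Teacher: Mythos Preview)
Your overall strategy---a compactness/contradiction argument combined with Lemma~\ref{lem1} and passage to a limit of minimizing geodesics---is the same as the paper's, and your treatment of the regime $d(w_1,w_2)\geq\delta_1$ is correct and essentially identical to the paper's Case~2 (the paper invokes \cite[Theorem 5, \S1]{Filippov_Book_88} and \cite[Lemma II.4.2]{Sakai_Book_96} where you invoke Arzel\`a--Ascoli and continuity of Riemannian length, to the same effect).

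The gap is in your small-$d$ regime. You write that ``by joint continuity in $r$ and in the triple $(w_1,w_2,\gamma^*)$ taken in a compact parameter space, there exists $\delta_1>0$ such that $b_{\gamma^*}/a_{\gamma^*}\leq \nu_M/2+1$''. But the ratio $b_{\gamma^*}/a_{\gamma^*}$ is \emph{not} continuous on that compact parameter space: it is undefined precisely where $a_{\gamma^*}=0$, namely where $\psi'(s_1)=0$, and since $\frac{\partial U}{\partial x}$ is assumed nowhere zero while the unit sphere is connected, that locus is non-empty and nowhere dense in the parameter space. Concretely, the $C^1$ property gives $a(w,r)=2\psi'(s_1)^2+O(r)$ and $b(w,r)=\nu(w_1)\psi'(s_1)^2+O(r)$ with uniform $O(r)$ constants; so for any candidate $\delta_1>0$ you can pick $r=\delta_1$ and a direction with $\psi'(s_1)^2\ll\delta_1$ (but nonzero), and then both numerator and denominator are dominated by their $O(r)$ remainders and the ratio is uncontrolled by your argument. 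The sentence ``the remaining sub-case $\psi'(s_1)=0$ is already covered'' does not help, because the problematic points are those with $\psi'(s_1)$ small but nonzero.

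The paper sidesteps this by running a \emph{single} contradiction argument $b_k-k\,a_k>\lambda/2$ with no a-priori split, and then distinguishing two cases for the cluster point $(w_{1\omega},w_{2\omega})$. In the degenerate case $w_{1\omega}=w_{2\omega}$ (i.e.\ $d_k\to0$), the inequality is combined with $a_k\geq 2\psi_k'(s_1)^2-Md_k$ and $b_k\leq\nu(w_{1k})\psi_k'(s_1)^2+Md_k$ to force $\psi_k'(s_1)^2\to0$; only after this is $b_k\to0$ obtained, yielding the contradiction. To repair your proof, either merge your two regimes into one contradiction with this case split, or, if you keep the split, replace the ratio-continuity claim in the small-$d$ part by the estimate just described.
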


\begin{proof}
For the sake of getting a contradiction, assume there exist a pair $(i_1,i_2)$ and a 
sequence $(w_{1\indice },w_{2\indice },\gamma _{\indice }^*)_{\indice \in \NN}$ 
of points and minimizing geodesic satisfying
\begin{eqnarray}
\nonumber
d(w_{1\indice },w_{2\indice })&\neq & 0
\\[0.5em]\nonumber
 w_{1\indice }&=&\gamma _{\indice }^*(0)
\\[0.5em]\nonumber
w_{2\indice }&=& \gamma _{\indice }^*(d(w_{1\indice },w_{2\indice }))
\\\label{LP28}
&=& 
w_{1\indice }\;+\; 
\int_0^{d(w_{1\indice },w_{2\indice })} \frac{d\gamma _{\indice }^*}{ds}(s ) ds 
\end{eqnarray}
and
\\[1em]$\displaystyle 
-\frac{\lambda }{2}\;+\; b_{\gamma _{\indice }^*}(w_{1\indice },w_{2\indice },d(w_{1\indice },w_{2\indice }))
$\refstepcounter{equation}\label{LP29}\hfill$(\theequation)$
\\[0.7em]\null\hfill$\displaystyle
 \geq \; \indice \,  a_{\gamma _{\indice }^*}(w_{1\indice },w_{2\indice },d(w_{1\indice },w_{2\indice }))
\; \geq \; 0
\  .
$\\[1em]
Because the sequence
$(b_{\gamma _{\indice }^*}(w_{1\indice },w_{2\indice },d(w_{1\indice },w_{2\indice })))_{\indice \in \NN}$
is bounded, we 
have
\begin{equation}
\label{LP32}
\lim_{\indice \to \infty }
a_{\gamma _{\indice }^*}(w_{1\indice },w_{2\indice },d(w_{1\indice },w_{2\indice }))\;=\; 0
\  .
\end{equation}
The sequence $(w_{1\indice },w_{2\indice })_{\indice \in \NN}$ has a cluster point
$(w_{1\omega },w_{2\omega })$. To keep the notations simple, we still 
denote by $\indice $ the index of the subsequence for which we have 
convergence to this point.
\par\vspace{1em}\noindent
\underline{\textit{Case 1~: $w_{1\omega }=w_{2\omega }=w_{\omega }$.}}
Assume we have
$$
\lim_{\indice \to \infty } d(w_{1\indice },w_{2\indice })
=
\lim_{\indice \to \infty } d(w_{1\indice },w_{\omega })
=
\lim_{\indice \to \infty } d(w_{2\indice },w_{\omega })
=
0
\,  .
$$
By compactness, $C^1$ property and boundedness, 
there exists a real number 
$M$ and an integer $\indice _*$ such that, for all $\indice $ larger than $\indice _*$, we have
\\[1em]\null \quad $\displaystyle 
a_{\gamma _{\indice }^*}(w_{1\indice },w_{2\indice },d(w_{1\indice },w_{2\indice }))
$\hfill \null \\[0.5em]\null \hskip 4.5em$\displaystyle 
\begin{array}{@{}cl@{}}
\geq &\displaystyle 
a_{\gamma _{\indice }^*}(w_{1\indice },w_{2\indice },0)\;-\; M\,  d(w_{1\indice },w_{2\indice })
\\
\geq &\displaystyle 
2\left|
\frac{\partial U}{\partial x}(w_{1\indice })\frac{d\gamma _{\indice }^*}{ds}(0)
\right|^2 
\;-\; M\,  d(w_{1\indice },w_{2\indice })
\end{array}
$\\[1em]\null \quad $\displaystyle 
b_{\gamma _{\indice }^*}(w_{1\indice },w_{2\indice },dw_{1\indice },w_{2\indice }))
$\hfill \null \\[0.5em]\null \hskip 4.5em $\displaystyle 
\begin{array}{@{}cl@{}}
\leq &\displaystyle 
b_{\gamma _{\indice }^*}(w_{1\indice },w_{2\indice },0)\;+\; M\,  d(w_{1\indice },w_{2\indice })
\\
\leq &\displaystyle 
\nu (w_{1\indice })
\left|
\frac{\partial U}{\partial x}(w_{1\indice })\frac{d\gamma _{\indice }^*}{ds}(0)
\right|^2 
\;+\; M\,  d(w_{1\indice },w_{2\indice })
\end{array}
$\\[1em]
This implies
\\[1em]$\displaystyle 
-\frac{\lambda }{2\indice }+
\left[\frac{\nu (w_{1\indice })}{\indice }-2\right]
\left|
\frac{\partial U}{\partial x}(w_{1\indice })\frac{d\gamma _{\indice }^*}{ds}(0)
\right|^2 
$\hfill \null \\[0.7em]\null\hfill$
\geq \; -M\left[\frac{1}{\indice }+1\right]\,  d(w_{1\indice },w_{2\indice })
$\\[1em]
and therefore
\begin{equation}
\label{LP30}
\lim_{k\to \infty }\left|
\frac{\partial U}{\partial x}(w_{1\indice })\frac{d\gamma _{\indice }^*}{ds}(0)
\right|^2 \;=\; 0
\end{equation}

Also, by compactness $\frac{w_{2\indice }-w_{1\indice }}{d(w_{1\indice },w_{2\indice })}$ has a cluster point 
we denote $v_\omega $. With again $\indice $ as index for the subsequence (of the 
subsequence!), we have
$$
v_\omega \;=\; \lim_{\indice \to \infty }
\frac{w_{2\indice }-w_{1\indice }}{d(w_{1\indice },w_{2\indice })}
$$
But with (\ref{LP28}), this gives also
$$
v_\omega \;=\; \lim_{\indice \to \infty }
\frac{d\gamma _{\indice }^*}{ds}(0)
$$
which, with (\ref{LP30}), gives~:
$$
\frac{\partial U}{\partial x}(w_{\omega })\,  v_\omega \;=\; 0
$$
and implies~:
$$
\lim_{\indice \to \infty } b_{\gamma _{\indice }^*}(w_{1\indice },w_{2\indice },d(w_{1\indice },w_{2\indice }))
\;= \; 0
$$
Since $a_{\gamma _{\indice }^*}(w_{1\indice },w_{2\indice },d(w_{1\indice },w_{2\indice }))$ is non negative, 
this contradicts (\ref{LP29}).
\par\vspace{1em}\noindent
\underline{\textit{Case 2~: $w_{1\omega }\neq w_{2\omega }$.}}
Assume now we have
\begin{eqnarray*}
&\displaystyle 
\lim_{\indice \to \infty } d(w_{1\indice },w_{1\omega })
=
\lim_{\indice \to \infty } d(w_{2\indice },w_{2\omega })
=
0
\  ,
\\&\displaystyle 
d(w_{1\omega },w_{2\omega })
\neq 0
\  .
\end{eqnarray*}
$(w_{1\indice },w_{2\indice })$ is in the compact set
$C_{i_1i_2}$ and $\gamma _{\indice }^*$ is a minimal geodesic
at least on $[0,d(w_{1p},w_{2p})]$. So, from~:
\\[1em]$\displaystyle 
\sqrt{\underline{p}}\,  |w_1-w_2|\; \leq  \; d(w_1,w_2)\; \leq  \; 
\sqrt{\overline{p}}\,  |w_1-w_2|
$\hfill \null \\\null \hfill $\displaystyle  \forall (w_1,w_2)\in \mathcal{C}\times\mathcal{C}
\  ,
$\\[1em]
we get, for all $s$ in $[0,d(w_{1\indice },w_{2\indice })]$,
$$
\sqrt{\underline{p}}
\,  |\gamma _{\indice }^*(s)-w_{1\indice }|\; \leq \; d(\gamma _{\indice }^*(s),w_{1\indice })\; 
\leq \; d(w_{1\indice },w_{2\indice })
\; \leq  \;
D_{i_1i_2}
$$
where
$$
D_{i_1i_2}\;=\; \sup_{(w_1,w_2)\in C_{i_1i_2}} d(w_1,w_2)
$$
We have also
\begin{eqnarray*}
|w_{1\indice }|&\leq &
|w_{2\indice }-w_{1\indice }|\;+\; |w_{2\indice }|
\\
&\leq &
\frac{d(w_{2\indice },w_{1\indice })+d(w_{2\indice },0)}{\sqrt{\underline{p}}}
\  ,
\\&\leq &
\frac{D_{i_1i_2}+(i_2+1)}{\sqrt{\underline{p}}}
\  .
\end{eqnarray*}
With the completeness of the metric, this implies that $\gamma _{\indice }^*:[0,D_{i_1i_2}]\to \RR^n$
takes its values in a compact set
independent of the index $\indice $ and is a solution of the geodesic 
equation. It follows, from instance from \cite[Theorem 5, \S 1]{Filippov_Book_88}, 
that there exist a subsequence (of the subsequence !) again with 
$\indice $ as index and a solution $\gamma _{\omega }^*$ of the geodesic equation satisfying,
$$
\gamma _{\omega }^*(0) \;=\;  w_{1\omega}
\quad ,\qquad 
\gamma _{\omega }^* d(w_{1\omega },w_{2\omega })) \;=\;  w_{2\omega }
\  .
$$
and
$$%
\lim_{\indice \to \infty }\gamma _{\indice }^*(s)\;=\; \gamma _{\omega }^*(s)
\qquad \textrm{uniformly in} \  s \in [0,D_{i_1i_2}]
\  .
$$
Also each $\gamma _{\indice }^*$ being minimizing between $w_{1k}$ and 
$w_{2k}$, $\gamma _{\omega }^*$ is minimizing between $w_{1\omega }$ 
and $w_{2\omega }$ (see \cite[Lemma II|.4.2]{Sakai_Book_96}). 
With the definitions (\ref{LP31}) and (\ref{LP33}) of
$a_{\gamma ^*}(w_{1},w_{2},d(w_{1},w_{2}))$
and
$b_{\gamma ^*}(w_{1},w_{2},d(w_{1},w_{2}))$
and with (\ref{LP32}) we obtain
\\[1em]$\displaystyle 
a_{\gamma _{\omega  }^*}(w_{1\omega  },w_{2\omega  },d(w_{1\omega  },w_{2\omega  }))
$\hfill \null \\\null \hfill $
\begin{array}{@{}cl@{}}
=&\displaystyle 
\lim_{\indice \to \infty } a_{\gamma _{\indice }^*}(w_{1\indice },w_{2\indice },d(w_{1\indice },w_{2\indice }))
\  ,
\\
=& 0
\  ,
\end{array}
$\\[1em]$\displaystyle 
b_{\gamma _{\omega }^*}(w_{1\omega },w_{2\omega },d(w_{1\omega },w_{2\omega }))
$\hfill \null \\\null \hfill $\displaystyle 
=\quad  
\lim_{\indice \to \infty } b_{\gamma _{\indice }^*}(w_{1\indice },w_{2\indice },d(w_{1\indice },w_{2\indice }))
\  .
$\\[1em]
With Lemma \ref{lem1}, we get~:
$$
b_{\gamma _{\omega }^*}(w_{1\omega },w_{2\omega },d(w_{1\omega 
},w_{2\omega }))\;=\; 0
$$
So, as in the previous case, we get a contradiction of (\ref{LP29}).

To complete the proof of Proposition \ref{Prop_GlobSynch}, it is sufficient to observe that 
for any pair $(w_1,w_2)$ in $\RR^{2n}$, we can find $(i_1,i_2)$ such that 
$C_{i_1i_2}$ contains it. It is then sufficient to pick 
$\kell(w_1,w_2)$ larger or equal to $\kell_{i_1i_2}$ to obtain, with 
(\ref{LP34}),
$$ 
\mathfrak D^+ d(w_1,w_2) \; \leq \; 
  \ddt \left.\left(\left.L(\Gamma(s,t))
\vrule height 0.51em depth 0.51em width 0em
\right|_{s_1}^{s_2}\right)\vrule height 0.51em depth 0.51em width 0em
\right|_{t=0}
\; \leq \; -\frac{\lambda }{2}\,  d(w_1,w_2)
$$
This gives~:
\\[1em]$\displaystyle 
\sqrt{\underline{p}}\,  \left|W_1(w_1,w_2,t)-W_2(w_1,w_2,t)\right|
$\hfill \null \\[0.5em]\null \hfill $\displaystyle 
\begin{array}[b]{@{}cl@{}}
\leq &\displaystyle 
d\left(W_1(w_1,w_2,t)-W_2(w_1,w_2,t)\right)
\  ,
\\
\leq &\displaystyle 
\exp\left(-\frac{\lambda }{2}t\right)\,  d(w_1,w_2)
\  ,
\\[0.7em]
\leq &\displaystyle 
\exp\left(-\frac{\lambda }{2}t\right)\,  \sqrt{\overline{p}}\,  |w_1-w_2|
\  .
\end{array}$
\end{proof}

\bibliography{BibVA}

\begin{thebibliography}{10}

\bibitem{AndrieuBesanconSerres_CDC_13_NecObs}
V.~Andrieu, G.~Besancon, and U.~Serres.
\newblock {Observability necessary conditions for the existence of observers}.
\newblock In {\em Proc. of the 52nd IEEE Conference on Decision and Control},
  2013.

\bibitem{AndrieuJayawardhanaPraly_CDC_13}
V.~Andrieu, B.~Jayawardhana, and L.~Praly.
\newblock On the transverse exponential stability and its use in incremental
  stability, observer and synchronization.
\newblock In {\em Proc. of the 52nd IEEE Conference on Decision and Control},
  2013.

\bibitem{AndrieuPraly_SIAM_06}
V.~Andrieu and L.~Praly.
\newblock {On the existence of Kazantzis-Kravaris / Luenberger Observers}.
\newblock {\em SIAM Journal on Control and Optimization}, 45(2):432--456, 2006.

\bibitem{Angeli_TAC_02_lyapIncStab}
D.~Angeli.
\newblock A {L}yapunov approach to incremental stability properties.
\newblock {\em Automatic Control, IEEE Transactions on}, 47(3):410--421, 2002.

\bibitem{Angeli_TAC_09_FurthReslyapIncStab}
D.~Angeli.
\newblock Further results on incremental input-to-state stability.
\newblock {\em Automatic Control, IEEE Transactions on}, 54(6):1386--1391,
  2009.

\bibitem{AstolfiOrtega_TAC_03_InvNewToolStabAdapCont}
A.~Astolfi and R.~Ortega.
\newblock {Invariance: A new tool for stabilization and adaptive control of
  nonlinear systems}.
\newblock {\em IEEE Transactions on Automatic Control}, 48(4):590--606, 2003.

\bibitem{Brezis_Book_73}
H.~Brezis.
\newblock {\em {Op{\'e}rateur maximaux monotones et semi-groupes de
  contractions dans les espaces de Hilbert}}, volume~5.
\newblock Mathematics Studies, 1973.

\bibitem{DePersisJayawardhana_TCNS_14}
C.~De~Persis and B.~Jayawardhana.
\newblock {On the internal model principle in the coordination of nonlinear
  systems}.
\newblock {\em IEEE Transactions on Control of Network Systems}, 2014.

\bibitem{Demidovich_UspeMatNauk_61_DissSysDiffEq}
B.~P. Demidovich.
\newblock Dissipativity of a system of nonlinear differential equations in the
  large.
\newblock {\em Uspekhi Matematicheskikh Nauk}, 16(3):216--216, 1961.

\bibitem{Filippov_Book_88}
A.F. Filippov.
\newblock {\em Differential Equations with Discontinuous Right Hand Sides}.
\newblock Mathematics and Its Applications. Kluwer Academic Publishers, 1988.

\bibitem{ForniSepculchre_TAC_2014}
F.~Forni and R.~Sepulchre.
\newblock A differential {L}yapunov framework for contraction analysis.
\newblock {\em Automatic Control, IEEE Transactions on}, 59(3):614--628, March
  2014.

\bibitem{FromionScorletti_CDC_05}
V.~Fromion and G.~Scorletti.
\newblock Connecting nonlinear incremental {L}yapunov stability with the
  linearizations {L}yapunov stability.
\newblock In {\em Decision and Control, 2005 and 2005 European Control
  Conference. CDC-ECC'05. 44th IEEE Conference on}, pages 4736--4741. IEEE,
  2005.

\bibitem{Hartman_Book_64}
P.~Hartman.
\newblock {\em {Ordinary differential equations}}.
\newblock Wiley, 1964.

\bibitem{HirschPughShub_Book_70invariant}
MW~Hirsch, C~Pugh, and M~Shub.
\newblock {\em Invariant manifolds, volume 583 of Lecture notes in
  mathematics}.
\newblock Springer, 1977.

\bibitem{IsacNemeth_Book_08}
George Isac and S{\'a}ndor~Zolt{\'a}n N{\'e}meth.
\newblock {\em Scalar and asymptotic scalar derivatives: theory and
  applications}, volume~13.
\newblock Springer, 2008.

\bibitem{IsidoriByrnes_TAC_90OutputReg}
Alberto Isidori and Christopher~I Byrnes.
\newblock Output regulation of nonlinear systems.
\newblock {\em Automatic Control, IEEE Transactions on}, 35(2):131--140, 1990.

\bibitem{Jouffroy_CDC_05AncContAnal}
J{\'e}r{\^o}me Jouffroy.
\newblock Some ancestors of contraction analysis.
\newblock In {\em Decision and Control, 2005 and 2005 European Control
  Conference. CDC-ECC'05. 44th IEEE Conference on}, pages 5450--5455. IEEE,
  2005.

\bibitem{Lewis_49_AJM_MetPropDiffEq}
DC~Lewis.
\newblock Metric properties of differential equations.
\newblock {\em American Journal of Mathematics}, 71(2):294--312, 1949.

\bibitem{Lewis_51_AJM_DiffEqVarMet}
DC~Lewis.
\newblock Differential equations referred to a variable metric.
\newblock {\em American Journal of Mathematics}, 73(1):48--58, 1951.

\bibitem{LohmillerSlotine_Aut_98_ContAnNLSyst}
W.~Lohmiller and J.-J.~E Slotine.
\newblock On contraction analysis for non-linear systems.
\newblock {\em Automatica}, 34(6):683--696, 1998.

\bibitem{Nemeth_PhD_98_GeomMinBroMonot}
S.Z. N{\'e}meth.
\newblock {\em Geometric aspects of Minty-Browder monotonicity}.
\newblock PhD thesis, PhD thesis, E {\"o}tv{\"o}s Lor{\'a}nd University.
  Budapest, 1998.

\bibitem{PavlovVanWouhNijmeijer_SCL_04ConvSystTribDemidovich}
A~Pavlov, A~Pogromsky, Nathan van~de Wouw, and Henk Nijmeijer.
\newblock Convergent dynamics, a tribute to {B}oris {P}avlovich {D}emidovich.
\newblock {\em Systems \& Control Letters}, 52(3):257--261, 2004.

\bibitem{Pavlov_Book_05}
A.~Pavlov, N.~van~de Wouw, and H.~Nijmeijer.
\newblock {\em Uniform Output Regulation of Nonlinear Systems: A convergent
  Dynamics Approach}.
\newblock Birkhauser, 2005.

\bibitem{Reich_Book_05_NLSemGrp}
Simeon Reich.
\newblock {\em Nonlinear semigroups, fixed points, and geometry of domains in
  Banach spaces}.
\newblock Imperial College Press, 2005.

\bibitem{RufferEtAl_SCL_13ConvSystIncStab}
B.~S. R{\"u}ffer, N.~van~de Wouw, and M.~Mueller.
\newblock Convergent systems vs. incremental stability.
\newblock {\em Systems \& Control Letters}, 62(3):277--285, 2013.

\bibitem{Sakai_Book_96}
T.~Sakai.
\newblock {\em Riemaniann Geometry}, volume 149 of {\em Translation of
  Mathematical Monographs}.
\newblock American Mathematical Society, 1996.

\bibitem{SanfelicePraly_TAC_12}
R.G. Sanfelice and L.~Praly.
\newblock Convergence of nonlinear observers on $\mathbb{R}^{n}$ with a
  {R}iemannian metric (part i).
\newblock {\em Automatic Control, IEEE Transactions on}, 57(7):1709--1722,
  2012.

\bibitem{SanfelicePraly_Report_13RiemanObs}
R.G. Sanfelice and L.~Praly.
\newblock Convergence of nonlinear observers on $\mathbb{R}^{n}$ with a
  {R}iemannian metric (part ii).
\newblock Internal report, 2013.

\bibitem{ScardoviSepulchre_Aut_09SynchNetw}
Luca Scardovi and Rodolphe Sepulchre.
\newblock Synchronization in networks of identical linear systems.
\newblock {\em Automatica}, 45(11):2557--2562, 2009.

\bibitem{Sontag_Book_10ContrSystInput}
E.D. Sontag.
\newblock Contractive systems with inputs.
\newblock In {\em Perspectives in Mathematical System Theory, Control, and
  Signal Processing}, pages 217--228. Springer, 2010.

\bibitem{Spivak_Book_79}
M.~Spivak.
\newblock {\em A Comprehensive Introduction to Differential Geometry},
  volume~2.
\newblock Publish or Perish, Inc., second edition, 1979.

\bibitem{TeelPraly_ESAIM_00}
A.R. Teel and L.~Praly.
\newblock {A smooth {L}yapunov function from a class-\$$\{$$\backslash$ mathcal
  $\{$KL$\}$$\}$\$ estimate involving two positive semidefinite functions}.
\newblock {\em ESAIM: Control Optim. Calc. Var.}, 5:313--367, 2000.

\bibitem{Wang_CDC_14}
L.~Wang, R.~Ortega, and H.~Su.
\newblock {On Parameter Convergence of Nonlinearly Parameterized Adaptive
  Systems: Analysis via Contraction and First Lyapunov’s Methods}.
\newblock In {\em Proc. 53rd IEEE Conference on Decision and Control}, 2014.

\bibitem{WielandEtAl_Aut_11IntModSyn}
Peter Wieland, Rodolphe Sepulchre, and Frank Allg{\"o}wer.
\newblock An internal model principle is necessary and sufficient for linear
  output synchronization.
\newblock {\em Automatica}, 47(5):1068--1074, 2011.

\bibitem{Yoshizawa_Book_1966stability}
T.~Yoshizawa.
\newblock {\em Stability theory by Liapunov's second method}, volume~9.
\newblock Mathematical Society of Japan, 1966.

\bibitem{Yoshizawa_ARMA_64_ExStaPerSol}
Taro Yoshizawa.
\newblock Extreme stability and almost periodic solutions of
  functional-differential equations.
\newblock {\em Archive for Rational Mechanics and Analysis}, 17(2):148--170,
  1964.

\end{thebibliography}

\end{document}